\numberwithin{equation}{section}
\newtheorem{theorem}{Theorem}
\newtheorem{lemma}[theorem]{Lemma}
\newtheorem{corollary}[theorem]{Corollary}
\newtheorem{proposition}[theorem]{Proposition}
\theoremstyle{definition}
\newtheorem{definition}[theorem]{Definition}
\newtheorem{example}[theorem]{Example}
\newtheorem{remark}[theorem]{Remark}
\numberwithin{theorem}{section}
\numberwithin{equation}{section}
\newcommand{\R}{\mathbb{R}}
\newcommand{\C}{\mathbb{C}}
\newcommand{\Cc}{\mathcal{C}}
\newcommand{\Om}{\Omega}
\newcommand{\Omf}{\partial\Omega}
\newcommand{\Omb}{\bar{\Omega}}
\newcommand{\fii}{\varphi}
\newcommand{\U}{\mathtt{U}}
\newcommand{\ca}{\text{Cap}}
\DeclareMathOperator{\diam}{diam}
\DeclareMathOperator{\dist}{dist}
\begin{document}

\title[Regularity of solutions to the Dirichlet problem]{Regularity of solutions to the Dirichlet problem for  Monge-Amp\`ere equations}

\author{Mohamad Charabati}
\date{\today}


\begin{abstract}
We study  H\"older continuity of  solutions to the Dirichlet problem for  measures having density in $L^p$, $p>1$, with respect to  Hausdorff-Riesz measures of order $2n-2+\epsilon$ for  $0<\epsilon \leq 2$, in a bounded strongly hyperconvex Lipschitz domain and the boundary data belongs to $ \Cc^{0,\alpha}(\Omf)$, $ 0<\alpha \leq 1$. 
\end{abstract}

\keywords{complex Monge-Amp\`ere equation, Hausdorff-Riesz measure, strongly hyperconvex Lipschitz domain, plurisubharmonic function.}

\subjclass[2010]{32W20, 32U15.}

\maketitle

\section*{Introduction}
Let $\mu$ be a finite Borel measure on a bounded domain $\Om \subset \C^n$.
Given  $ f\in L^p(\Om,\mu)$ for $p>1$, and $ \fii \in \Cc(\Omf)$, the Dirichlet problem for the complex Monge-Amp\`ere equation asks for a function $u$  such that
\begin{center}
$  Dir(\Om,\fii,fd\mu): $
$ \left\{\begin{array}{ll}
            u\in PSH(\Omega)\cap\Cc(\Omb),& \\
           (dd^c u)^n=f d\mu  & \text{  in  } \; \Omega, \\
            u=\fii  & \text{ on  }  \Omf, \\
         \end{array} \right. $
\end{center}
where $ PSH(\Om)$ denotes the set of plurisubharmonic (psh) functions in $\Om$ and  
$ (dd^c.)^n $ is the complex  Monge-Amp\`ere operator.

This problem  has attracted attention for many years,
we refer the reader to \cite{BT76, Ce84, CP92,  Bl96, Ko98}, and references therein, for more details.

\medskip

In the case when $\Om$ is a  strongly pseudoconvex domain and $\mu$ is the Lebesgue measure,
Bedford and Taylor \cite{BT76} 
proved the existence and uniqueness of the solution to $ Dir(\Om,\fii,f dV_{2n})$  with   $0 \leq f \in \Cc(\Omb)$. 
Moreover, the solution is $ \alpha$-H\"older continuous when
$ \fii \in Lip_{2\alpha}(\Omf)$ and
$ f^{1/n} \in Lip_{\alpha}(\Omb)$ for $0< \alpha \leq 1$.
For more general domains, the existence and  regularity of the solution had been studied in \cite{Bl96, Ch15}.
\\
Ko{\l}odziej \cite{Ko96,Ko98} demonstrated that this problem still admit a unique solution  
When $  f \in L^p(\Om)$, $p>1$, and more generally when the right hand side of the complex Monge-Amp\`ere equation is a measure satisfying some sufficient condition which is close to be best possible.

H\"older continuity of  solutions to this problem for densities in $L^p$ with respect to the Lebesgue measure was studied in \cite{GKZ08}, \cite{N14}, \cite{Ch15} and \cite{BKPZ15}.
On a compact K\"ahler manifold, the existence of solutions is due to \cite{Ko98} and  H\"older regularity of  solutions to  complex Monge-Amp\`ere equations has been  investigated by many authors, we refer to \cite{ Ko08,  DDGPKZ14} for more details.

\smallskip
In the case of singular measures with respect to the Lebesgue measure,  H. H. Pham \cite{Ph10} proved the H\"older continuity of the solution to the complex Monge-Amp\`ere equation on a compact K\"ahler manifold.  There is no study about the regularity in the local case in $\C^n$.

Our purpose in this paper is to explore the H\"older continuity of the solution to the Dirichlet problem $Dir(\Om,\fii,fd\mu)$ when $\mu$ is a Hausdorff-Riesz measure of order $2n-2+\epsilon$, for $0<\epsilon \leq 2$ (see Definition \ref{def Hausdorff Riesz}). Precisely, we prove the following.

\begin{theorem}\label{main2 C11}
Let $\Om$ be a bounded strongly hyperconvex Lipschitz domain in $\C^n$ and $\mu$ be a Hausdorff-Riesz measure of order $2n-2+\epsilon$ for $0<\epsilon \leq 2$. Suppose that  $ \fii \in \Cc^{1,1}(\Omf)$ and $ 0 \leq f \in L^p(\Om,\mu)$ for some $p>1$, then the unique solution to $ Dir(\Om,\fii,fd\mu)$ is H\"older continuous on $\Omb$ of exponent $\epsilon \gamma/2$ for any $ 0< \gamma < 1/(nq+1)$ and $1/p+1/q=1$.
\end{theorem}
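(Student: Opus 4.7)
I would follow the three-step blueprint (existence, boundary barriers, interior stability) of \cite{GKZ08}, as adapted to SHL domains in \cite{Ch15} and to singular measures in \cite{Ph10,N14}. The only new ingredient is that $\mu$ is merely Hausdorff-Riesz of order $2n-2+\ep$, so every estimate involving $\mu$ must be reworked using the mass bound $\mu(B(z,r))\le C r^{2n-2+\ep}$.

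\textbf{Existence and capacity domination.} A continuous solution $u$ of $Dir(\Om,\fii,fd\mu)$ should follow from a Ko\l odziej-type $L^{\infty}$ estimate once one has a domination $\mu(K)\le C\,\ca(K,\Om)^{1+\tau}$ with $\tau=\tau(\ep)>0$. I would derive this (or invoke it from the preceding sections) by a Cegrell-type argument combining a covering of sublevel sets of the extremal psh function by small balls with the Hausdorff-Riesz bound; H\"older's inequality applied to $f\in L^{p}(\Om,\mu)$ then upgrades it to $\int_{K} f\,d\mu\le C\,\ca(K,\Om)^{1+\tau/q}$. Continuity of $u$ up to $\Omf$ follows from the SHL hypothesis together with $\fii\in\Cc(\Omf)$.

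\textbf{Boundary regularity.} Because $\fii\in\Cc^{1,1}(\Omf)$, I would squeeze $u$ between Lipschitz barriers. The Perron envelope $\underline u:=\sup\{v\in PSH(\Om):v^{*}\le \fii\text{ on }\Omf\}$ is Lipschitz on $\Omb$ under the SHL assumption, and a Lipschitz psh majorant is obtained from a smooth extension of $\fii$ corrected by $A(|z|^{2}-R^{2})$ for a large constant $A$. The comparison principle then gives $|u(z)-\fii(w)|\le C|z-w|$ for every $z\in\Omb$, $w\in\Omf$.

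\textbf{Interior stability and the main obstacle.} To upgrade the boundary bound to global H\"older regularity on $\Omb$, I would compare $u$ with its sup-convolution $\hat u_{\delta}(z):=\sup_{|\zeta|\le \delta}u(z+\zeta)$, which is psh on $\Om_{\delta}:=\{\dist(\cdot,\Omf)>\delta\}$, and glue it across the collar of width $\delta$ by means of the previous step to obtain a global psh competitor with $\hat u_{\delta}-u\le C\delta^{1/2}$ on $\Omf$. Applying the Dinew--Ko\l odziej stability inequality
\[
\sup_{\Om}(\hat u_{\delta}-u)\le C\Bigl(\int_{\Om}(\hat u_{\delta}-u)^{+}\,f\,d\mu\Bigr)^{\gamma},\qquad 0<\gamma<\tfrac{1}{nq+1},
\]
reduces everything to controlling $\int(\hat u_{\delta}-u)^{+}\,d\mu$. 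This is the real crux: since $\mu$ is genuinely singular with respect to Lebesgue measure, the standard volume bounds for the sublevel sets of $\hat u_{\delta}-u$ are useless, and one must instead cover them by small balls and sum the Hausdorff-Riesz estimates $\mu(B(z,r))\le C r^{2n-2+\ep}$. The outcome is $\int(\hat u_{\delta}-u)^{+}\,d\mu\le C\delta^{\ep}$; inserting this into the stability inequality yields $\|\hat u_{\delta}-u\|_{\infty}\le C\delta^{\ep\gamma/2}$, which is exactly $u\in\Cc^{0,\ep\gamma/2}(\Omb)$.
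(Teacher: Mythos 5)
Your high-level blueprint (existence, boundary barrier, interior stability) matches the paper's, but several crux steps are misdescribed or missing, and as written the exponent $\epsilon\gamma/2$ does not actually follow from what you assert.

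First, the mechanism you propose for $\int_{\Om_\delta}(\hat u_{\delta}-u)\,d\mu \le C\delta^{\epsilon}$ --- ``cover the sublevel sets of $\hat u_\delta - u$ by small balls and sum the Hausdorff-Riesz estimates'' --- is not how this is obtained and is not obviously carried out. The paper's Lemma \ref{Laplacian2} expresses $\hat u_\delta(z)-u(z)$ via the Poisson--Jensen/Riesz formula as $\frac{c_n}{\delta^{2n}}\int_0^{\delta}r^{2n-1}dr\int_0^r t^{1-2n}dt\int_{B(z,t)}\Delta u$, integrates in $d\mu(z)$, applies Fubini, and then uses $\mu(B(\xi,t))\le Ct^{2n-2+\epsilon}$ to cancel the $t^{1-2n}$ kernel and produce $\delta^{\epsilon}$. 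This requires, as a prior input, that $\int_\Om \Delta u < +\infty$, which is precisely what Proposition \ref{mass laplace C11} establishes using the $\Cc^{1,1}$ regularity of $\fii$; your proposal never addresses the finiteness of the total Laplacian mass, without which the Riesz-formula estimate collapses.

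Second, you have mislocated the source of the factor $1/2$ in the final exponent. You claim the boundary gluing yields $\hat u_\delta - u \le C\delta^{1/2}$ on the collar and then the stability inequality gives $C\delta^{\epsilon\gamma/2}$. But applying the stability inequality to an $L^1$-bound of order $\delta^{\epsilon}$ gives $\sup(\hat u_\delta - u)\le C\delta^{\epsilon\gamma}$, and taking the minimum with $\delta^{1/2}$ still leaves $\delta^{\epsilon\gamma}$ (since $\epsilon\gamma<1/2$). In fact the paper does obtain the better exponent $\epsilon\gamma$ in the case where $f$ vanishes near $\Omf$, because there the Lipschitz barrier $A\rho+\tilde\fii$ works (with $\nu=1$ in Theorem \ref{via barrier}). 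The factor $1/2$ only appears in the general case, and it comes from the barrier construction, not the gluing: when $f\in L^p(\Om,\mu)$ does not vanish near the boundary, one first solves the Dirichlet problem on a large ball $B\supset\Om$ with data $\tilde f$, getting $v_1\in\Cc^{0,\epsilon\gamma}(\bar B)$, and then solves the \emph{homogeneous} problem $Dir(\Om,\fii-v_1,0)$, whose solution $h_{\fii-v_1}$ is only $\Cc^{0,\epsilon\gamma/2}(\Omb)$ by Theorem A of \cite{Ch15} (the standard ``$\alpha\mapsto\alpha/2$'' loss for the homogeneous Dirichlet problem on an SHL domain). The barrier $v_2=v_1+h_{\fii-v_1}$ is therefore only $\Cc^{0,\epsilon\gamma/2}$, and Theorem \ref{via barrier} then forces the final exponent $\frac{1}{\lambda}\min\{\nu,\tau\gamma\}=\min\{\epsilon\gamma/2,\epsilon\gamma\}=\epsilon\gamma/2$. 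Your Lipschitz barrier $A(|z|^2-R^2)+\tilde\fii$ is a valid \emph{sub}solution only when $f$ is bounded or compactly supported, so it cannot replace this two-step construction.
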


This result generalizes the one proved in \cite{GKZ08, Ch15}  from which the main idea  of our proof originates.

When the boundary data is merely H\"older continuous we state the regularity of the solution using the previous theorem.

\begin{theorem}\label{main2 C0,alpha}
Let $\Om$ be a bounded strongly hyperconvex Lipschitz domain in $\C^n$ and $\mu$ be a Hausdorff-Riesz measure of order $2n-2+\epsilon$ for $0<\epsilon \leq 2$. Suppose that $ \fii \in \Cc^{0,\alpha}(\Omf)$, $ 0<\alpha \leq 1$ and $ 0 \leq f \in L^p(\Om,\mu)$ for some $p>1$, then the unique solution to $ Dir(\Om,\fii,fd\mu)$ is H\"older continuous on $\Omb$ of exponent $ \frac{\epsilon}{\epsilon+6} \min \{ \alpha, \epsilon \gamma  \}$ for any $ 0< \gamma < 1/(nq+1)$ and $1/p+1/q=1$.

Moreover, when $\Om$  is a smooth strongly pseudoconvex domain the H\"older exponent of the solution will be
$ \frac{\epsilon}{\epsilon+2} \min \{ \alpha, \epsilon \gamma  \}$, for any $ 0< \gamma < 1/(nq+1)$.
\end{theorem}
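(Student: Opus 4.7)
The plan is to deduce the theorem from Theorem \ref{main2 C11} by an approximation-plus-interpolation argument. First I would regularize the boundary datum: extending $\varphi \in \Cc^{0,\alpha}(\Omf)$ to a Hölder continuous function on $\C^n$ (possible since $\Omf$ is Lipschitz), convolving with a standard $\delta$-mollifier, and restricting back to $\Omf$ produces $\varphi_\delta \in \Cc^{1,1}(\Omf)$ with
\[
\|\varphi-\varphi_\delta\|_{L^\infty(\Omf)} \le K\delta^\alpha \qquad\text{and}\qquad \|\varphi_\delta\|_{\Cc^{1,1}(\Omf)} \le K\delta^{\alpha-2}.
\]
Let $u$ be the unique continuous solution to $Dir(\Om,\varphi,fd\mu)$ and $u_\delta$ the solution to $Dir(\Om,\varphi_\delta,fd\mu)$; comparing $u$ with $u_\delta \pm K\delta^\alpha$ yields the stability bound $\|u-u_\delta\|_{L^\infty(\Omb)} \le K\delta^\alpha$, while Theorem \ref{main2 C11} gives that $u_\delta$ is Hölder continuous of exponent $\epsilon\gamma/2$.

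The crucial second step would be to revisit the proof of Theorem \ref{main2 C11} and track the explicit dependence of the Hölder constant of $u_\delta$ on $\|\varphi_\delta\|_{\Cc^{1,1}}$, producing a bound of the form $H(\delta) \le C\delta^{-\sigma}$ with a computable $\sigma>0$. The value of $\sigma$ reflects the quality of the plurisubharmonic barriers available near $\Omf$: in an SHL domain one pays an extra factor related to the Lipschitz character of $\Omf$ compared with the smooth strongly pseudoconvex case, which is ultimately responsible for the two different denominators $\epsilon+6$ and $\epsilon+2$ in the final exponent.

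With these two ingredients in hand, for any $x,y\in\Omb$ one would combine
\[
|u(x)-u(y)| \le 2K\delta^\alpha + H(\delta)\,|x-y|^{\epsilon\gamma/2}
\]
and optimize in $\delta$ by balancing the two terms. This yields the announced Hölder exponents $\frac{\epsilon}{\epsilon+6}\min\{\alpha,\epsilon\gamma\}$, respectively $\frac{\epsilon}{\epsilon+2}\min\{\alpha,\epsilon\gamma\}$ in the smooth case. The cut-off at $\min\{\alpha,\epsilon\gamma\}$ simply reflects the fact that once $\alpha > \epsilon\gamma$, the intrinsic interior regularity $\epsilon\gamma/2$ from Theorem \ref{main2 C11} becomes the binding constraint and no further gain can be extracted from smoother boundary data.

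The hard part will be the second step: extracting from the proof of Theorem \ref{main2 C11} a quantitative Hölder estimate with the correct power of $\|\varphi_\delta\|_{\Cc^{1,1}}$. This requires carefully tracking constants through both the barrier construction near $\Omf$ and the Ko{\l}odziej-type $L^\infty$ estimates used to control $u_\delta - \varphi_\delta$; once the precise value of $\sigma$ is obtained, the final optimization in $\delta$ is an elementary calculus exercise, and the improvement in the smooth case is automatic once the sharper barriers are available.
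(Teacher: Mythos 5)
Your mollification-plus-interpolation scheme is a genuinely different route from the paper's, and it has a serious gap at exactly the step you flag as ``the hard part.'' Work out what the balancing actually requires: if $\|u-u_\delta\|_{L^\infty}\le K\delta^\alpha$ and the H\"older seminorm of $u_\delta$ (exponent $\epsilon\gamma/2$) grows like $\delta^{-\sigma}$, then optimizing $2K\delta^\alpha + C\delta^{-\sigma}t^{\epsilon\gamma/2}$ in $\delta$ gives final exponent $\dfrac{\alpha\,\epsilon\gamma/2}{\alpha+\sigma}$. Matching this to the claimed $\dfrac{\epsilon}{\epsilon+6}\min\{\alpha,\epsilon\gamma\}$ forces $\alpha+\sigma=\dfrac{\gamma(\epsilon+6)}{2}$ when $\alpha\le\epsilon\gamma$, and a different $\alpha$-dependent value of $\sigma$ when $\alpha>\epsilon\gamma$. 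A $\sigma$ produced by mollification has no reason to vary with $\alpha$ and $\gamma$ in this way (the natural guess from $\|\varphi_\delta\|_{\Cc^{1,1}}\lesssim\delta^{\alpha-2}$ would be $\sigma$ proportional to $2-\alpha$, which does not reproduce the target). So not only is the quantitative dependence unproved, it appears that no constant-tracking through Theorem~\ref{main2 C11} can yield the stated exponent by this route.

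Your diagnosis of the $\epsilon+6$ versus $\epsilon+2$ split is also off. In the paper the two denominators have nothing to do with the H\"older constant of the $\Cc^{1,1}$ solution (Theorem~\ref{main2 C11} gives exponent $\epsilon\gamma/2$ in both the SHL and the smooth strongly pseudoconvex case). They come from Lemma~\ref{Laplacian3} versus Corollary~\ref{Laplacian 3 strongly}: the solution with merely H\"older boundary data has a Laplacian whose total mass may be infinite, but the \emph{weighted} mass $\|(-\tilde\rho)^{(3+\epsilon_1)/2}\Delta u\|_{\Om}$ is finite, and since $-\tilde\rho\asymp\dist(\cdot,\Omf)^2$ in the SHL case versus $-\rho\asymp\dist(\cdot,\Omf)$ in the smooth case, one must shrink $\delta_1=\tfrac12\delta^{1/2+3/\epsilon}$ in the former but only $\delta_1=\tfrac12\delta^{1/2+1/\epsilon}$ in the latter to bound $\|\hat u_{\delta_1}-u\|_{L^1(\Om_\delta,\mu)}$. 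The exponents $1/2+3/\epsilon=(\epsilon+6)/(2\epsilon)$ and $1/2+1/\epsilon=(\epsilon+2)/(2\epsilon)$ are exactly the $\lambda$'s fed into Theorem~\ref{via barrier}, whose conclusion scales by $1/\lambda$. The paper's proof keeps the boundary datum $\varphi$ fixed, builds a H\"older psh barrier $v=v_1+h_{\varphi-v_1}$ via the solution $v_1$ on a larger ball and the harmonic-like envelope $h_{\varphi-v_1}$, and then applies Theorem~\ref{via barrier} directly with hypotheses (i) and (ii) furnished by this barrier and by Lemma~\ref{Laplacian3}. That mechanism — a weighted Laplacian estimate compensating for an unbounded $\int\Delta u$ — is the piece your proposal is missing, and replacing it by an abstract $H(\delta)\le C\delta^{-\sigma}$ does not recover it.
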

In the case of the Lebesgue measure, i.e. $\epsilon=2$, in  a smooth bounded  strongly pseudoconvex domain  we get the H\"older exponent $\min \{ \alpha/2, \gamma\}$ which is better than the one obtained in \cite{BKPZ15}.
\medskip

Finally, a natural question is that if we have a H\"older continuous subsolution to the Dirichlet problem, can we get a H\"older continuous solution?\\
This question is still open in the local case (see \cite{DDGPKZ14} for a positive answer in the compact setting). However, we deal in Theorem \ref{main3} some particular case.

\noindent {\bf Acknowledgements}.
I am very grateful to my advisor, Professor Ahmed Zeriahi, for useful discussions and suggestions.

\section{Existence of solutions  to the Dirichlet problem}

This section is devoted to explain briefly the existence of continuous solutions to the Dirichlet problem $Dir(\Om,\fii,\mu)$ for  measures $\mu$ dominated by Bedford-Taylor's capacity, as in (\ref{measure}) below, in a bounded strongly hyperconvex Lipschitz domain.

We begin by recalling (see \cite{Ch15}) that a bounded domain $ \Om \subset \C^n$ is called {\it strongly hyperconvex Lipschitz}  domain if there exist an open neighborhood $\Om'$ of $\Omb$ and 
a Lipschitz  plurisubharmonic defining function $ \rho : {\Om}' \to \R $ 
 such that  
\begin{enumerate}
\item $\Om= \{ \rho < 0\}$ and $\partial \Omega = \{  \rho = 0\}$,
\item $ dd^c \rho \geq 2 \beta $ in  $\Omega$
in the weak sense of currents, where $\beta$  is the standard K\"ahler form on $\C^n$.
\end{enumerate}

\begin{example}\label{hyperconvex}
\
\begin{enumerate}
\item Any bounded  strictly convex domain is a bounded strongly hyperconvex Lipschitz domain. 
\item The nonempty finite intersection of smooth strongly pseudoconvex bounded domains in $ \C^n$  is a bounded strongly hyperconvex Lipschitz  domain. 
\item The domain $ \Om = \{ z = (z_1,\cdots,z_n) \in \C^n ; |z_1| + \cdots + |z_n| < 1 \}$
($n \geq 2$) is a bounded strongly hyperconvex Lipschitz domain in $\C^n$ 
with non-smooth boundary.
\end{enumerate}
\end{example}

We need in the sequel the following property of a bounded strongly hyperconvex Lipschitz domain.

\begin{lemma}\label{lemma tilde rho}
Let $\Om$ be a bounded strongly hyperconvex Lipschitz domain. Then there exist  a defining function $\tilde{\rho} \in PSH(\Om) \cap \Cc^{0,1}(\Omb)$ such that near $\Omf$ we have
\begin{equation}\label{tilde rho}
c \dist(z,\Omf) \geq - \tilde{\rho}(z) \geq \dist(z,\Omf)^2,
\end{equation}
for some $c>0$ depending on $\Om$.

Moreover $dd^c \tilde \rho \geq \beta$ in the weak sense of currents on $\Omega$.
\end{lemma}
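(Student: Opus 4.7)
The plan is to take $\tilde\rho$ to be simply the restriction of the given defining function $\rho$ to $\Omb$ and check that all required properties fall out of the hypotheses almost immediately. Since $\rho$ is Lipschitz on the open neighborhood $\Om'$ of $\Omb$, certainly $\tilde\rho \in \Cc^{0,1}(\Omb)$, and $\tilde\rho \in PSH(\Om)$ because $\rho$ is psh. The current inequality $dd^c\tilde\rho \geq \beta$ is then weaker than the assumption $dd^c\rho \geq 2\beta$, so it holds trivially.

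The upper bound $-\tilde\rho(z) \leq c\,\dist(z,\Omf)$ is a direct consequence of Lipschitz continuity: since $\tilde\rho$ vanishes on $\Omf$, for any $z\in\Om$ and any $w\in\Omf$ one has $|\tilde\rho(z)|=|\tilde\rho(z)-\tilde\rho(w)|\leq L|z-w|$ with $L$ the Lipschitz constant of $\rho$, and minimizing over $w \in \Omf$ gives the estimate with $c=L$.

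The main point is the quadratic lower bound $-\tilde\rho(z) \geq \dist(z,\Omf)^2$, which I plan to prove by a maximum-principle trick exploiting the strong plurisubharmonicity of $\rho$. Fix $z \in \Om$ and introduce the auxiliary function
$$v_z(w) := \rho(w) - |w-z|^2, \qquad w \in \Omb.$$
Since $dd^c|w-z|^2 = 2\beta$, the hypothesis $dd^c\rho \geq 2\beta$ gives $dd^c v_z \geq 0$ in the weak sense of currents, so $v_z \in PSH(\Om) \cap \Cc(\Omb)$. Applying the maximum principle to $v_z$ and using that $\rho = 0$ on $\Omf$ yields
$$\rho(z) = v_z(z) \leq \max_{w\in\Omf} v_z(w) = -\min_{w\in\Omf}|w-z|^2 = -\dist(z,\Omf)^2,$$
which is exactly the inequality $-\tilde\rho(z) \geq \dist(z,\Omf)^2$ (and, as a bonus, it holds globally on $\Om$, not only near $\Omf$).

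No step here presents a real obstacle; the one conceptual observation is that the strong plurisubharmonicity $dd^c\rho \geq 2\beta$ is precisely what allows the model quadratic $|w-z|^2$ to be absorbed into $\rho$ while preserving the psh property, so that the maximum principle directly produces the quadratic lower bound without any need for smoothness of $\rho$ or of $\Omf$.
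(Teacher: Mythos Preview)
Your proof is correct. Both you and the paper exploit the same key observation --- that $\rho(\cdot)-|\cdot-\xi|^2$ remains plurisubharmonic because $dd^c\rho\geq 2\beta$ --- but you deploy it in a different way. The paper \emph{builds} a new defining function
\[
\tilde\rho(z):=\sup_{\xi\in\Omf}\bigl(\rho(z)-|z-\xi|^2\bigr)=\rho(z)-\dist(z,\Omf)^2,
\]
for which the lower bound $-\tilde\rho\geq\dist(\cdot,\Omf)^2$ collapses to the triviality $-\rho\geq 0$; the price is checking that this supremum over an uncountable family is still Lipschitz and plurisubharmonic. You instead keep $\tilde\rho=\rho$ and extract the lower bound directly by applying the maximum principle to the psh function $w\mapsto\rho(w)-|w-z|^2$. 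Your route is a bit more economical --- no auxiliary supremum to justify --- and it shows that the original defining function $\rho$ already satisfies all the conclusions of the lemma (in fact globally on $\Om$, as you note).
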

\begin{proof}
 Since $\Om$ is a strongly hyperconvex Lipschitz domain, there exists a defining function $\rho$ such that $dd^ c \rho \geq 2 \beta$ in the weak sense of currents on $\Omega$. Let us fix $\xi \in \Omf$, then the function defined by 
$ \tilde{\rho}_{\xi}(z):= \rho(z)- |z-\xi|^2$ is Lipschitz continuous in $\Omb$ and satisfies $dd^c \tilde{\rho}_{\xi} \geq \beta$ in the weak sense of currents on $\Omega$. Hence  
$ \tilde{\rho}_{\xi} \in PSH(\Om)\cap \Cc^{0,1}(\Omf)$.  Set
$$
 \tilde{\rho}:= \sup\{ \tilde{\rho}_{\xi}; \; \xi \in \Omf \}.
$$
It is clear that $\tilde{\rho} \in \Cc^{0,1}(\Omb) \cap PSH(\Omega)$ and thus the first inequality in (\ref{tilde rho}) holds. For any $\xi \in \Omf$ we have
$ - \tilde{\rho}_{\xi} (z) \geq C |z-\xi|^2$, so we infer that 
$$
-\tilde{\rho}(z) \geq  \dist(z,\Omf)^2,
$$
for any $z$ near $\Omf$.

The last statement follows from the fact that for any $\xi \in \Omf$, $dd^c \tilde{\rho}_{\xi} \geq \beta$ in the weak sense of currents on $\Omega$.
\end{proof}

\begin{remark}\label{rho smooth}
When $\Om$ is a smooth strongly pseudoconvex domain, we know that the defining function $\rho$ satisfies near the boundary, 
$$
-\rho \approx \dist(.,\Omf).
$$
\end{remark}

\begin{definition}
A finite Borel measure $\mu$  on $\Om$ is said to satisfy Condition $ \mathcal{H}(\tau)$ for some fixed $\tau>0$ if  there exists a positive constant $A$  such that 
\begin{equation}\label{measure}
\mu(K) \leq A \ca(K,\Om)^{1+\tau},
\end{equation}
for any Borel subset $K$ of $\Om$.
\end{definition}

Ko{\l}odziej \cite{Ko98} demonstrated the existence of a continuous solution to $Dir(\Om,\fii,\mu)$  when $\mu$ verifies   (\ref{measure}) and some local extra condition in a bounded strongly pseudoconvex domain with smooth boundary. Furthermore, he disposed of the extra condition in \cite{Ko99} using Cegrell's result \cite{Ce98} about the existence of a solution in the energy class $\mathcal{F}_1$. 

Here, we only summarize the steps of the proof of the existence of continuous solutions to  $Dir(\Om,\fii,\mu)$ in a bounded strongly hyperconvex Lipschitz domain following the lines of Ko{\l}odziej and Cegrell's arguments in \cite{Ko98, Ce98}.

\begin{theorem}\label{main1}
Let $\mu$ be a  measure satisfying Condition $ \mathcal{H}(\tau)$, for some $\tau>0$,  on a bounded strongly hyperconvex Lipschitz domain  $\Om \subset \C^n$ and $ \fii \in \Cc(\Omf)$. Then there exists a unique continuous solution to $Dir(\Om,\fii,\mu)$.
\end{theorem}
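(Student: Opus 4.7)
\textbf{Uniqueness} is immediate from the Bedford--Taylor comparison principle: if $u,v$ are two continuous psh solutions, then on the open set $\{u>v+\delta\}$ (for $\delta>0$ small, this set is compactly contained in $\Om$ since $u=v=\fii$ on $\Omf$), we have $(dd^c u)^n=(dd^c v)^n=\mu$, forcing this set to be empty; by symmetry $u=v$.

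\textbf{Existence strategy.} I would split the problem in two stages following Ko\l odziej--Cegrell: first solve with vanishing boundary data, then glue the boundary data via a Perron--Bremermann envelope. For the zero-boundary step, approximate $\mu$ from below by a sequence $\mu_j$ of measures with continuous, compactly supported densities (truncate $\mu$ to $\{\tilde\rho<-1/j\}$ and convolve with a smooth mollifier). The classical Bedford--Taylor theory, adapted to the SHL setting by using $\tilde\rho$ from Lemma \ref{lemma tilde rho} as a global psh exhaustion, produces continuous solutions $\psi_j\in PSH(\Om)\cap\Cc(\Omb)$ with $\psi_j=0$ on $\Omf$ and $(dd^c\psi_j)^n=\mu_j$.

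\textbf{Uniform bounds and passage to the limit.} Each $\mu_j$ still satisfies Condition $\mathcal{H}(\tau)$ with the same constant $A$ (monotonicity of $\ca(\cdot,\Om)$). Ko\l odziej's $L^\infty$ estimate, whose proof uses only the capacity bound $\mathcal{H}(\tau)$, then yields $\sup_\Om|\psi_j|\leq M$ independently of $j$, and his stability estimate shows $(\psi_j)$ is Cauchy in $L^\infty(\Om)$. The uniform limit $\psi\in PSH(\Om)\cap\Cc(\Omb)$ satisfies $\psi=0$ on $\Omf$, and convergence in capacity of $\psi_j\to\psi$ gives $(dd^c\psi)^n=\mu$ via the standard weak-convergence theorem for Monge--Amp\`ere masses. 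For the boundary data, I would form the Perron--Bremermann envelope
\[
h_\fii(z):=\sup\bigl\{v(z):v\in PSH(\Om),\ \limsup_{\zeta\to\xi}v(\zeta)\leq\fii(\xi)\text{ for all }\xi\in\Omf\bigr\},
\]
which is a maximal psh function with $(dd^c h_\fii)^n=0$. Once I verify $h_\fii\in\Cc(\Omb)$ with $h_\fii=\fii$ on $\Omf$, the sum $u:=\psi+h_\fii$ is the desired solution since $(dd^c(h_\fii+\psi))^n\geq(dd^c\psi)^n=\mu$ and comparison forces equality together with the correct boundary values.

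\textbf{Main obstacle.} The delicate step is continuity up to the boundary, since $\Omf$ is only Lipschitz. This is exactly where Lemma \ref{lemma tilde rho} is essential: the two-sided estimate $c\,\dist(z,\Omf)\geq -\tilde\rho(z)\geq\dist(z,\Omf)^2$ together with $dd^c\tilde\rho\geq\beta$ allows one to build, for each $\xi\in\Omf$, an explicit psh subbarrier of the form $\fii(\xi)+C_\fii\,\tilde\rho$ (for the envelope) and $-C(-\tilde\rho)^{1/n}$ (for $\psi_j$), which squeeze the candidates between psh sub- and harmonic super-barriers that agree with the data on $\Omf$. This forces both $\psi_j$ and $h_\fii$ to attain their boundary values \emph{uniformly}, yielding continuity of the limits on $\Omb$. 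A secondary technicality is extracting a uniform modulus of continuity at the boundary that survives the passage $j\to\infty$; this is handled by noting that the barrier construction depends only on $\tilde\rho$ and on the uniform modulus of $\fii$, not on the smoothness of $\mu_j$.
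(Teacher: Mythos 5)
There is a genuine gap in your final assembly step. You propose to set $u := \psi + h_\fii$, where $\psi$ solves the zero-boundary problem and $h_\fii$ is the Perron--Bremermann envelope, and claim that "comparison forces equality." But $(dd^c(\psi+h_\fii))^n \geq (dd^c\psi)^n + (dd^c h_\fii)^n = \mu$ is in general a \emph{strict} inequality: although $(dd^c h_\fii)^n = 0$, the current $dd^c h_\fii$ is typically nonzero, and the mixed terms $(dd^c\psi)^k\wedge(dd^c h_\fii)^{n-k}$ contribute positive mass. So $\psi + h_\fii$ is only a subsolution with the right boundary values; the comparison principle can tell you a solution (if it existed) would lie \emph{above} this subsolution, but it cannot upgrade the inequality to an equality. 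The paper avoids this entirely by building the boundary data $\fii$ into the approximating Dirichlet problems from the start ($u_s = \fii$ on $\Omf$ for each $s$), rather than splitting off the boundary piece. (The additive decomposition $v_1 + h_{\fii-v_1}$ does appear in the paper, but only to manufacture a H\"older \emph{subbarrier}, never the solution itself.)

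A second, more subtle gap is your claim that the truncated-and-mollified measures $\mu_j$ satisfy Condition $\mathcal{H}(\tau)$ with the \emph{same} constant $A$ by "monotonicity of $\ca(\cdot,\Om)$." Truncation decreases the measure, which is fine, but convolution spreads mass: $\mu_j(K)$ involves $\mu$ on a neighborhood $K_\eta$ of $K$, and $\ca(K_\eta,\Om)$ need not be comparable to $\ca(K,\Om)$ in the useful direction. That Condition $\mathcal{H}(\tau)$ survives regularization (with a possibly different constant $B=B(n,\tau)$, and only for small enough mesh) is a nontrivial lemma of Ko{\l}odziej, which is precisely why the paper uses his specific cube-averaging discretization $\mu_s = \sum_j \frac{\mu(I_j^s)}{d_s^{2n}}\chi_{I_j^s}\,dV_{2n}$ rather than an arbitrary mollifier. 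Related to this, you assert that Ko{\l}odziej's stability estimate makes $(\psi_j)$ Cauchy in $L^\infty$, but the stability estimate available here (Theorem \ref{stability theorem}) requires an $L^p$ density, which these approximants need not share; the paper instead takes $u := (\limsup u_s)^*$ and establishes weak convergence of the Monge--Amp\`ere measures through Cegrell's energy techniques ($\int u_s\,d\mu\to\int u\,d\mu$ and $\int|u_s-u|\,d\mu_s\to 0$), which is the genuinely delicate point the proposal glosses over.
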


\begin{proof}
Suppose first that $\mu$ has compact support in $\Om$. Let us consider a subdivision $I^s$ of $\text{supp}\mu$  consisting of $3^{2ns}$ congruent semi-open cubes $I_j^s$ with side $d_s=d/3^s$, where $ d:=\diam (\Om)$ and $ 1 \leq j \leq 3^{2ns}$.
Thanks to Proposition 5.3 in \cite{Ch15}, one can find $ u_s \in PSH(\Om)\cap \Cc(\Omb)$ such that
$$
u_s=\fii  \text{ on } \Omf,
$$
and
$$
(dd^c u_s)^n = \mu_s := \sum_j \frac{\mu(I_j^s)}{d_s^{2n}} \chi_{I_j^s} dV_{2n} \text{ in } \Om, 
$$ 
where $\chi_{I_j^s}$ is  the characteristic function of $I_j^s$.
We can ensure following \cite{Ko98} that 
there exist $s_0>0$ and  $B=B(n,\tau)>0$ such that 
for all $s>s_0$ the measure $\mu_s$ satisfies
$$
 \mu_s(K)  \leq B \ca(K,\Omega)^{1+\tau}, 
$$
for all Borel subsets $K$ of $ \Om$.
Then, we prove that the $L^\infty$-norm of $u_s$, for $s>s_0$, is bounded by an absolute constant depending only on $n$ and $\tau$.  

We set $ u:= (\limsup u_s)^*$ which is a candidate to be the solution to $Dir(\Om,\fii,\mu)$.
The delicate point is then to show that $ (dd^c u_s)^n$ converges to $(dd^c u)^n$ in the weak sense of measures  as in \cite{Ko98} (see also \cite{GZ07}). For this purpose, we invoke Cegrell's techniques \cite{Ce98} to ensure that 
$$
 \int_\Om u_s \, d\mu \to \int_\Om u \, d\mu \text{ and } \int_\Om |u_s -u| \, d\mu_s \to 0, \text{ when } s \to +\infty.
$$
For the general case,  let $\chi_j$ is a nondecreasing sequence of smooth cut-off function, $\chi_j \nearrow 1$ in $\Om$.  We get solutions $u_j$ to the Dirichlet problem for the measures $ \chi_j \mu$. Then,  the solutions $u_j$ are uniformly bounded. We set $u:=(\limsup u_j)^* \in PSH(\Om)\cap L^\infty(\Omb)$ and the last argument yields that $u$ is the required bounded solution to $Dir(\Om,\fii,\mu)$. 

Finally, we assert the continuity of the solution in the spirit of \cite{Ko98}.
\end{proof}

We introduce an important class of Borel measures on $\Om$ containing Riesz measures and
closely related to Hausdorff measures which play an important role in geometric
measure theory \cite{Ma95}. We call such measures Hausdorff-Riesz measures.

\begin{definition}\label{def Hausdorff Riesz}
A finite Borel measure on $\Om$ is called a { \it Hausdorff-Riesz } measure of order $2n-2+\epsilon$, for $0<\epsilon \leq 2$ if it satisfies the following condition : 
\begin{equation}\label{Hausdorff Riesz}
\mu(B(z,r)\cap \Omega) \leq C r^{2n-2+\epsilon}, \;\; \forall z\in \Omb,\; \forall 0<r<1,
\end{equation}
for some positive constant $C$. 
\end{definition}

We  give some interesting examples of  Hausdorff-Riesz measures. 
\begin{example}
\
\begin{enumerate}
\item The Lebesgue measure $dV$ on $\Om$, for $\epsilon=2$.
\item The surface measure of a compact real hypersurface, for $\epsilon=1$.
\item Measures of the type $ dd^c v \wedge \beta^{n-1}$, where $v$ is a $\alpha$-H\"older continuous subharmonic function in a neighborhood of $\Omb$, for $\epsilon=\alpha$.
\item The measure $ {\bf{1}}_E \mathcal{H}^{2n-2+\epsilon}$, where $ \mathcal{H}^{2n-2+\epsilon} $ is the Hausdorff measure and $E$ is a Borel set such that $ \mathcal{H}^{2n-2+\epsilon}(E) < +\infty$.
\item If $\mu$ is a Hausdorff-Riesz measure of order $2n-2+\epsilon$, then $f d\mu$ is a Hausdorff-Riesz measure of order $(2n-2+\epsilon)/q$, for any $ f \in L^p(\Om,\mu)$, $p > (2n-2+\epsilon)/\epsilon$.
\end{enumerate}

\end{example}

The existence of  continuous solutions to $Dir(\Om,\fii, f d\mu)$ for such  measures follows immediately from Theorem \ref{main1} and the following lemma.

\begin{lemma}\label{dominated by cap}
Let $ \Omega$ be a bounded strongly hyperconvex Lipschitz domain and $\mu$ be a Hausdorff-Riesz measure of order $2n-2+\epsilon$, for $0<\epsilon \leq 2$. Assume that $ 0 \leq f \in  L^p(\Om,\mu)$ for $ p > 1$, then for all $  \tau >1$ there exists $ D>0$ depending on $\tau, \epsilon$ and $\diam(\Om)$ such that for any Borel set $K \subset \Om$, 
\begin{equation}
\int_K f d\mu \leq D \|f\|_{L^p(\Om,\mu)} [\ca (K,\Om)]^\tau.
\end{equation} 
\end{lemma}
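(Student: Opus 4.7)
The plan is first to apply Hölder's inequality to factor $f$ out of the integral, reducing the lemma to a pure capacity estimate on $\mu$, and then to invoke a volume--capacity comparison of Alexander--Taylor type adapted to the Hausdorff--Riesz setting in bounded strongly hyperconvex Lipschitz domains.

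By Hölder's inequality with conjugate exponents $p$ and $q$ ($1/p+1/q=1$),
$$\int_K f\, d\mu \leq \|f\|_{L^p(\Om,\mu)}\, \mu(K)^{1/q}.$$
Hence it suffices to prove that, for every $\sigma>0$, there is a constant $C_\sigma = C_\sigma(\sigma,\epsilon,\diam(\Om))$ such that
$$\mu(K) \leq C_\sigma\, \ca(K,\Om)^\sigma$$
for every Borel set $K \subset \Om$; specializing to $\sigma := q\tau$ with $\tau>1$ then yields the stated inequality with $D := C_{q\tau}^{1/q}$.

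To establish this $\mu$--capacity bound, I would aim for the stronger exponential estimate
$$\mu(K) \leq C \exp\!\bigl(-\alpha_\epsilon / \ca(K,\Om)^{1/n}\bigr),$$
which automatically dominates any positive power of $\ca(K,\Om)$. Let $u_K^*$ denote the upper semicontinuous regularization of the relative extremal function of $K$ in $\Om$; then $u_K^* \in PSH(\Om)$ satisfies $-1 \leq u_K^* \leq 0$, equals $-1$ quasi-everywhere on $K$, and $\ca(K,\Om) = \int_\Om (dd^c u_K^*)^n$. Using the Lipschitz psh defining function $\tilde\rho$ furnished by Lemma \ref{lemma tilde rho} as a global comparison function (in the spirit of Alexander--Taylor and Siciak--Zaharyuta estimates on hyperconvex domains), one shows that the sublevel sets $\{u_K^* < -s\}$ shrink around $K$ at a rate quantitatively controlled by $\ca(K,\Om)^{1/n}$. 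Covering such a sublevel set by balls of a suitably chosen radius $r(s)$ and applying the Hausdorff--Riesz hypothesis $\mu(B(z,r)\cap\Om) \leq C r^{2n-2+\epsilon}$ to each ball, the total $\mu$-mass of $K$ is dominated by an exponentially small quantity in $\ca(K,\Om)^{-1/n}$.

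The main obstacle is precisely this last step: the classical Alexander--Taylor volume--capacity inequality delivers exactly the desired bound in the Lebesgue case $\epsilon=2$, but for a general Hausdorff--Riesz measure one has to balance the polynomial factor $r^{2n-2+\epsilon}$ against the growth of the covering count so as to preserve exponential decay in $\ca(K,\Om)^{-1/n}$. Once this quantitative covering estimate is secured, the passage from an exponential bound to an arbitrary polynomial bound is immediate, and combining with the initial Hölder inequality closes the proof.
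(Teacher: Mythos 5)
Your reduction via H\"older's inequality, and the subsequent reformulation in terms of a measure--capacity estimate culminating in an exponential bound $\mu(K)\leq C\exp\bigl(-\alpha_\epsilon\,\ca(K,\Om)^{-1/n}\bigr)$, is exactly the structure the paper follows. What you have not actually done is prove that exponential bound; you yourself flag it as ``the main obstacle'' and then write ``Once this quantitative covering estimate is secured \dots'', which is where the real work lies and where the proposal stops. The sketch you give (sublevel sets of the relative extremal function, covering by balls of radius $r(s)$, summing $r^{2n-2+\epsilon}$ over the cover) is plausible in spirit but the key balance between the covering count and the radius is precisely the delicate point, and it is not carried out, so as written this is a genuine gap rather than a completed proof.

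The paper closes this gap differently and much more efficiently: it fixes a ball $B=B(z_0,2\diam(\Om))\Supset\Om$, invokes Corollary 5.2 of Zeriahi \cite{Z04} (a result specifically designed to estimate Hausdorff--Riesz measures of Borel sets in terms of the logarithmic capacity $T_R$), obtaining $\mu(K)\leq C\,(T_R(K))^{\epsilon/2}$, and then applies the Alexander--Taylor inequality $T_R(K)\leq\exp\bigl(-\ca(K,B)^{-1/n}\bigr)$ together with the monotonicity $\ca(K,B)\leq\ca(K,\Om)$. This yields the exponential bound as a two-line corollary of known results rather than via a new covering argument. If you want to salvage your approach, you should either carry out the covering estimate in detail (which is essentially reproving the relevant part of \cite{Z04}) or, as the paper does, cite that result directly.
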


\begin{proof}
By the H\"older inequality we have
$$
\int_K f d\mu \leq  \|f\|_{L^p(\Om,\mu)}  \mu(K)^{1/q}.
$$
Let $z_0 \in \Om$ be a fixed point and $R:= 2 \diam(\Om)$. Hence,  $ \Om \Subset B:=B(z_0,R)$. For any Borel set $K \subset \Omega$  we  get, by Corollary 5.2 in \cite{Z04} and  Alexander-Taylor's inequality, that
$$ 
\mu(K) \leq C (T_R(K))^{\epsilon/2} \leq C \exp(-\epsilon/2  \  \ca(K,B)^{-1/n}) \leq C \exp(- \epsilon/2  \ca(K,\Om)^{-1/n}),
$$
where $C>0$ depends on $\epsilon$ and  $\diam(\Om)$.
\\
Now, for any $  \tau >1$, we can find $D >0$ depending on $\tau, \epsilon$ and $\diam(\Om)$ such that
$$
 \int_K f d\mu \leq  D \|f\|_{L^p(\Om,\mu)} [\ca(K,\Om)]^{\tau}.
$$
\end{proof}

\section{Preliminaries}

We introduce in this section  basic ingredients of proofs of our results.

We prove in the following proposition that the total mass of Laplacian of the solution is finite when the boundary data is $\Cc^{1,1}$-smooth.

\begin{proposition}\label{mass laplace C11}
Let $\mu$ be a finite Borel measure satisfying Condition $ \mathcal{H}(\tau)$ on $\Omega$ and $\fii \in \Cc^{1,1}(\Omf)$. 
Then the solution $u$ to $Dir(\Om,\fii, d\mu)$ has the property that
$$
\int_\Om \Delta u \leq C,
$$
where $C>0$ depends on $n$, $\Om$ and $\mu(\Om)$.
\end{proposition}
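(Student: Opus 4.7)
The plan is to construct a continuous plurisubharmonic minorant $\underline u \leq u$ sharing the boundary value $\fii$, and then bound $\int_\Om dd^c u \wedge \beta^{n-1}$ in terms of $\int_\Om dd^c \underline u \wedge \beta^{n-1}$ via Stokes' theorem. The payoff of the $\Cc^{1,1}$-hypothesis is that one may take $\underline u$ to contain a Lipschitz plurisubharmonic piece matching $\fii$ on $\Omf$.

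First I extend $\fii$ to $\tilde\fii \in \Cc^{1,1}(\Omb)$ by Whitney extension. Using Lemma \ref{lemma tilde rho}, for $A$ exceeding the essential supremum of $|dd^c\tilde\fii|$, the function $\Phi := \tilde\fii + A\tilde\rho$ lies in $PSH(\Om) \cap \Cc^{0,1}(\Omb)$, equals $\fii$ on $\Omf$, and satisfies $dd^c\Phi \geq \beta$. Next, by Theorem \ref{main1}, let $v \in PSH(\Om) \cap \Cc(\Omb)$ solve $Dir(\Om,0,d\mu)$, so that $v = 0$ on $\Omf$ and $(dd^c v)^n = d\mu$. Put $\underline u := \Phi + v$: then $\underline u \in PSH(\Om) \cap \Cc(\Omb)$, $\underline u = \fii$ on $\Omf$, and since $dd^c\Phi \geq 0$,
\[
(dd^c \underline u)^n \geq (dd^c v)^n = d\mu = (dd^c u)^n.
\]
The comparison principle then gives $\underline u \leq u$ in $\Om$.

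Writing $w := u - \underline u \geq 0$ with $w = 0$ on $\Omf$, I decompose
\[
\int_\Om dd^c u \wedge \beta^{n-1} = \int_\Om dd^c \underline u \wedge \beta^{n-1} + \int_\Om dd^c w \wedge \beta^{n-1}.
\]
Since $\beta^{n-1}$ is closed, Stokes' theorem gives $\int_\Om dd^c w \wedge \beta^{n-1} = \int_{\Omf} d^c w \wedge \beta^{n-1} \leq 0$ (because $w \geq 0$ attains its minimum on $\Omf$, so the outward normal derivative of $w$ is $\leq 0$ along $\Omf$). Hence $\int_\Om dd^c u \wedge \beta^{n-1} \leq \int_\Om dd^c\underline u \wedge \beta^{n-1}$, and splitting $\underline u = \tilde\fii + A\tilde\rho + v$, the first piece is bounded by $\|dd^c\tilde\fii\|_\infty \, \mathrm{vol}(\Om)$, and the second by Stokes applied to the Lipschitz psh function $\tilde\rho$ vanishing on $\Omf$.

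The principal obstacle is twofold. First, Stokes' theorem must be justified for the merely continuous $w$; this follows from a standard smoothing argument combined with the Bedford--Taylor convergence of the mixed Monge--Amp\`ere currents appearing in the boundary formula. Second, one must bound $\int_\Om dd^c v \wedge \beta^{n-1}$ even though $v$ is only continuous on $\Omb$. Here I would start from the self-adjoint identity
\[
\int_\Om (-\tilde\rho) \, dd^c v \wedge \beta^{n-1} = \int_\Om (-v)\, dd^c\tilde\rho \wedge \beta^{n-1} \leq \|v\|_\infty \int_\Om dd^c\tilde\rho \wedge \beta^{n-1},
\]
(valid since both $\tilde\rho$ and $v$ vanish on $\Omf$), which already controls the weighted integral, and then use the condition $\mathcal{H}(\tau)$ to localise and absorb the contribution near $\Omf$. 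Alternatively, one may approximate $\mu$ by the absolutely continuous measures $\mu_s$ from the proof of Theorem \ref{main1}, for which the corresponding solutions $v_s$ satisfy a uniform Lipschitz estimate near $\Omf$ yielding $\int_\Om dd^c v_s \wedge \beta^{n-1} \leq C(n,\Om,\mu(\Om))$; the bound on $\int_\Om dd^c v \wedge \beta^{n-1}$ then follows from weak convergence.
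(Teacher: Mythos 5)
Your construction of the barrier $\underline u = \tilde\fii + A\tilde\rho + v$ and the reduction via comparison plus Stokes to the estimate $\int_\Om dd^c u \wedge \beta^{n-1} \leq \int_\Om dd^c \underline u \wedge \beta^{n-1}$ is exactly the framing of the paper's proof, and the pieces $\tilde\fii$, $A\tilde\rho$ are handled correctly. The genuine gap is the one you flag yourself: bounding $\int_\Om dd^c v \wedge \beta^{n-1}$, where $v$ solves $Dir(\Om,0,d\mu)$ and is merely continuous on $\Omb$. Your self-adjoint identity gives only the \emph{weighted} estimate $\int_\Om (-\tilde\rho)\, dd^c v \wedge \beta^{n-1} \leq \|v\|_\infty \int_\Om dd^c\tilde\rho \wedge \beta^{n-1}$, and since $-\tilde\rho \approx \dist(\cdot,\Omf)^2$ near the boundary (Lemma \ref{lemma tilde rho}), this weight degenerates quadratically at $\Omf$ and controls nothing there. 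Condition $\mathcal{H}(\tau)$ is a statement about $\mu = (dd^c v)^n$, not about the trace measure $dd^c v \wedge \beta^{n-1}$, so it does not localise the Laplacian mass near the boundary in any direct way. Your alternative — a uniform Lipschitz bound for the approximating solutions $v_s$ near $\Omf$ — is also unsubstantiated: the densities $\mu_s$ have $L^\infty$ norms that blow up as $s \to \infty$, so no such uniform gradient bound follows from the standard barrier constructions.

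The missing ingredient is Cegrell's energy (Hölder-type) inequality for mixed Monge--Amp\`ere measures (Corollary 5.6 in \cite{Ce04}): for $u_0$ in the appropriate Cegrell class,
\[
\int_{\Om} dd^c u_0 \wedge (dd^c \rho)^{n-1} \leq \left(\int_{\Om} (dd^c u_0)^n \right)^{1/n}  \left( \int_{\Om} ( dd^c \rho )^n \right)^{(n-1)/n} \leq \mu(\Om)^{1/n}\left(\int_\Om (dd^c\rho)^n\right)^{(n-1)/n}.
\]
Combined with $dd^c\rho \geq c\beta$ (so $\int_\Om dd^c u_0 \wedge \beta^{n-1} \leq c^{1-n}\int_\Om dd^c u_0 \wedge (dd^c\rho)^{n-1}$) and the Chern--Levine--Nirenberg bound on $\int_\Om(dd^c\rho)^n$, this gives an \emph{unweighted} bound on $\int_\Om dd^c u_0 \wedge \beta^{n-1}$ depending only on $n$, $\Om$, and $\mu(\Om)$. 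This is the crucial mechanism; without it your argument cannot close. Once it is inserted in place of your weighted identity, your proof coincides in substance with the paper's.
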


\begin{proof}
Let $u_0$ be the solution to the Dirichlet problem $Dir(\Om,0,d\mu)$.
We first claim that the total mass of $ \Delta u_0$ is finite in $\Om$. 
Indeed, let $ \rho$ be the  defining function of $\Om$. Then by Corollary 5.6 in \cite{Ce04}
we get 
\begin{equation}\label{cegrell inequlaity}
\begin{aligned}
\int_{\Om} dd^c u_0 \wedge (dd^c \rho)^{n-1} & \leq \left(\int_{\Om} (dd^c u_0)^n \right)^{1/n}  \left( \int_{\Om} ( dd^c \rho )^n \right)^{(n-1)/n} \\
& \leq \mu(\Om)^{1/n} \left( \int_{\Om} (dd^c \rho)^n \right)^{(n-1)/n}. \\
\end{aligned} 
\end{equation}  
Since $\Om$ is a bounded strongly hyperconvex Lipschitz domain, there exists a constant $c>0$ such that $ dd^c \rho \geq c \beta$ in $\Omega$. 
Hence,  (\ref{cegrell inequlaity})  yields 
\begin{equation*} 
\begin{aligned}
\int_{\Om} dd^c u_0 \wedge \beta^{n-1}  &  \leq\frac{1}{c^{n-1}}\int_{\Om} dd^c u_0 \wedge               
 (dd^c \rho)^{n-1}  \\
& \leq \frac{\mu(\Om)^{1/n}}{c^{n-1}}  \left( \int_{\Om} (dd^c \rho)^n \right)^{(n-1)/n} .\\ 
\end{aligned}
\end{equation*}
\noindent
Now we note that the total mass of complex Monge-Amp\`ere measure of $\rho$  is finite in $\Om$ by the Chern-Levine-Nirenberg inequality and since $\rho$ is psh and bounded in a neighborhood of $\Omb$.
Therefore, the total mass of $\Delta u_0$ is finite in $\Om$.
\\
Let $\tilde{\fii}$ be a $\Cc^{1,1}$-extension of $\fii$ to $\Omb$ such that
 $ \| \tilde{\fii}\|_{\Cc^{1,1}(\Omb)} \leq C \| \fii \|_{\Cc^{1,1}(\Omf)}$ for some $C>0$.
Now, let $ v= A\rho + \tilde{\fii} + u_0$ where $ A \gg 1$ such that
 $ A \rho +\tilde{\fii} \in PSH(\Om)$. 
By the comparison principle we see that $ v \leq u$ in $\Om$ and $ v=u =\fii$ on $\Omf$. 
Since $ \rho$ is psh in a neighborhood of $ \Omb$ and $ \| \Delta u_0 \|_\Om < +\infty$, 
we deduce that $  \| \Delta v \|_\Om < +\infty$.
This completes the proof.
\end{proof}

\begin{definition}
A finite Borel measure $\mu$  on $\Om$ is said to satisfy Condition $ \mathcal{H}(\infty)$ if for any $ \tau>0$ there exists a positive constant $A$ depending on $\tau$  such that 
$$
\mu(K) \leq A \ca(K,\Om)^{1+\tau},
$$
for any Borel subset $K$ of $\Om$.
\end{definition}

Let $\mu$ be  a measure satisfying Condition $\mathcal{H}(\infty)$, $ 0 \leq f \in L^p(\Om,\mu)$, $p>1$ and $\fii \in \Cc(\Omf)$. Let also $u$ be  the continuous solution to $Dir(\Om,\fii,f d\mu)$ and consider 
$$
u_\delta(z):= \sup_{|\zeta|\leq \delta} u(z+\zeta),\;  z\in \Om_\delta,
$$
where $ \Om_\delta:= \{ z \in \Om ; \text{dist}(z,\Omf) >\delta \}$.

To ensure the H\"older continuity of the solution in $\Om$, we need to control the $L^\infty$-norm of $u_\delta-u$ in $\Om_\delta$.

It is shown in \cite{GKZ08} that  the H\"older norm of the solution $u$  can be estimated by using either $\sup_{\Om_\delta} (u_\delta-u)$ or  $ \sup_{\Om_\delta} (\hat{u}_\delta -u)$, where
$$
\hat u_\delta (z):= \frac{1}{\tau_{2n}\delta^{2n}} \int_{|\zeta -z|\leq \delta} u(\zeta) dV_{2n}(\zeta), \;\; z \in \Omega_\delta,
$$
and $\tau_{2n}$ is the volume of the unit ball in $\C^n$.

It is clear that $\hat u_\delta$ is  defined in $\Om_\delta$, so we extend it with a good control near the boundary $\Omf$. To this end, we assume the existence of $\nu$-H\"older continuous function $v$ such that $v\leq u$ in $\Om$ and $v=u$ on $\Omf$. Then, we present later the construction of such a function.

\begin{lemma}\label{extension}
Let $\Om$ be a bounded strongly hyperconvex Lipschitz domain and $ \fii \in \Cc^{0,\alpha}(\Omf)$, $0<\alpha \leq 1$.
Assume that there is a function $v \in \Cc^{0,\nu}(\Omb)$ for $0 <\nu \leq 1$, such that 
$ v \leq u$ in $\Om$ and $ v= \fii$ on $\Omf$. Then there exist $\delta_0>0$ small enough and $ c_0>0$, depending on $\Om,\| \fii \|_{\Cc^{0,\alpha}(\Omf)}$ and $\|v\|_{\Cc^{0,\nu}(\Omb)} $,   such that for any $  0< \delta_1 \leq \delta < \delta_0$ the function
\begin{equation*}
 	\tilde {u}_{\delta_1}
 		= \begin{cases}
			\max\{\hat{u}_{\delta_1}, u+ c_0 \delta^{\nu_1} \} & \text{ in } \; \Om_\delta, \\
                   u +c_0 \delta^{\nu_1} & \text{ in } \; \Om\setminus \Omega_\delta,
           \end{cases}
\end{equation*}
is  plurisubharmonic  in $\Omega$ and  continuous on $\Omb$, where $\nu_1= \min \{ \nu, \alpha/2\}$.
\end{lemma}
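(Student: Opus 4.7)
The overall plan is a standard gluing argument for plurisubharmonic functions. The function $\max\{\hat u_{\delta_1}, u + c_0 \delta^{\nu_1}\}$ is automatically psh on $\Omega_\delta$ (max of two psh functions) and continuous there; the function $u + c_0 \delta^{\nu_1}$ is continuous on $\bar \Omega$ and psh on $\Omega$. To glue them into a psh function on all of $\Omega$, I need the two pieces to coincide in some open neighborhood $W \subset \Omega_\delta$ of $\partial \Omega_\delta \cap \Omega$; concretely, I need the strict inequality
\begin{equation*}
\hat u_{\delta_1}(z) \;<\; u(z) + c_0 \delta^{\nu_1}
\end{equation*}
to hold there, so that $\max = u + c_0 \delta^{\nu_1}$ in $W$. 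Then the piecewise function matches $u + c_0 \delta^{\nu_1}$ on the open set $W \cup (\Omega \setminus \bar\Omega_\delta)$ and $\max\{\dots\}$ on $\Omega_\delta$, both psh, and agreeing on the overlap, so locality of psh gives the result.

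The heart of the argument is the estimate above, which I would obtain from two barriers. The lower barrier $v \in \Cc^{0,\nu}(\Omb)$ is already in the hypothesis. For the upper side, I would invoke (as is classical in SHL domains, cf.\ \cite{Ch15}) that the Perron-Bremermann envelope $H$ of the Dirichlet problem $(dd^c H)^n = 0$, $H|_{\Omf} = \fii$, lies in $\Cc^{0,\alpha/2}(\Omb)$ and, by the maximum principle, satisfies $u \leq H$ on $\Om$. Now fix $z$ with $d := \dist(z, \Omf) \leq \delta$ and let $\xi \in \Omf$ realize $|z - \xi| = d$. Averaging the inequality $u \leq H$ on $B(z,\delta_1)$ and using $\Cc^{0,\alpha/2}$-regularity of $H$ yields
\begin{equation*}
\hat u_{\delta_1}(z) \;\leq\; H(z) + \|H\|_{\Cc^{0,\alpha/2}} \delta_1^{\alpha/2}.
\end{equation*}
Writing $H(z) - u(z) \leq H(z) - v(z) = [H(z) - \fii(\xi)] + [v(\xi) - v(z)]$, applying $\alpha/2$-H\"older regularity of $H$ to the first bracket and $\nu$-H\"older regularity of $v$ to the second, gives $H(z) - u(z) \leq C(d^{\alpha/2} + d^\nu) \leq C \delta^{\nu_1}$. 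Combining with the previous line and using $\delta_1 \leq \delta$ and $\alpha/2 \geq \nu_1$, I get
\begin{equation*}
\hat u_{\delta_1}(z) - u(z) \;\leq\; C_1 \delta^{\nu_1},
\end{equation*}
for a constant $C_1$ depending only on the H\"older norms of $\fii$ and $v$ and on $\Om$. Taking $c_0 := 2 C_1$ gives strict inequality at every $z$ with $\dist(z,\Omf) \leq \delta$, and by continuity of both sides, the strict inequality persists in a small open neighborhood $W \subset \Omega_\delta$ of the level set $\{\dist(\cdot,\Omf) = \delta\}$.

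The main obstacle in this plan is the upper barrier $H \in \Cc^{0,\alpha/2}(\Omb)$: the loss of exponent from $\alpha$ to $\alpha/2$ reflects the quadratic behavior of the psh defining function $\tilde\rho$ near $\Omf$ (Lemma \ref{lemma tilde rho}), and is exactly the origin of the $\alpha/2$ in $\nu_1 = \min\{\nu, \alpha/2\}$. Once $H$ is in hand (either by appeal to \cite{Ch15} or by an explicit barrier $\fii(\xi) + M(|z-\xi|^\alpha + (-\tilde\rho(z))^{\alpha/2})$ followed by taking the infimum over $\xi \in \Omf$), continuity of $\tilde u_{\delta_1}$ on $\Omb$ follows because $u \in \Cc(\Omb)$ and $\hat u_{\delta_1} \in \Cc(\Omega_{\delta_1})$ agree with $u + c_0 \delta^{\nu_1}$ on the matching region $W$, and plurisubharmonicity on $\Om$ follows from locality as explained in the first paragraph.
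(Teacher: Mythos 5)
Your proof is correct and follows essentially the same route as the paper: both arguments produce an $\alpha/2$-H\"older upper barrier from \cite{Ch15} (the paper takes a plurisuperharmonic extension $\tilde v\in\Cc^{0,\alpha/2}(\Omb)$, you take the Perron--Bremermann envelope $H$, which is interchangeable here), combine it with the given lower barrier $v$ to show $\hat u_{\delta_1}-u\leq c_0\delta^{\nu_1}$ near $\partial\Om_\delta$, and conclude by the standard gluing lemma. The only cosmetic difference is that you average the inequality $u\leq H$ to bound $\hat u_{\delta_1}$ by $H(z)+C\delta_1^{\alpha/2}$, whereas the paper bounds $\hat u_{\delta_1}(z)\leq u(z+\zeta)\leq\tilde v(z+\zeta)$ for some $|\zeta|\leq\delta_1$; both then split via a boundary point $z+\zeta_0\in\Omf$ in the same way.
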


\begin{proof}
If we  prove that $ \hat{u}_{\delta_1} \leq u+ c_0 \delta^{\nu_1}$ on $\partial \Om_\delta$, then the required result can be obtained by the standard gluing procedure.
\\
Thanks to Corollary 4.6 in \cite{Ch15}, we find a plurisuperharmonic function $\tilde{v} \in \Cc^{0,\alpha/2}(\Omb)$ such that $ \tilde{v}=\fii$ on $\Omf$ and 
$$
\| \tilde{v}\|_{\Cc^{0,\alpha/2}(\Omb)} \leq C \|\fii\|_{\Cc^{0,\alpha}(\Omf)},
$$
where $C$ depends on $\Om$.
From the maximum principle we see that $u \leq  \tilde{v}$ in $\Om$ and $ \tilde{v}=u$ on $\Omf$.
\\
Fix  $z\in \partial \Om_\delta$,
there exists $\zeta \in \C^n$ with $\|\zeta\|=\delta_1$ such that
 $\hat{u}_{\delta_1}(z) \leq u(z+\zeta)$. Hence we obtain
$$
\hat{u}_{\delta_1} (z)-u(z) \leq u(z+\zeta)-u(z) \leq \tilde{v}(z+\zeta) -v(z).
$$
We  choose $\zeta_0 \in \C^n$, with $\|\zeta_0\| = \delta$, such that
$z+\zeta_0 \in \Omf$.
Since  $\tilde{v}(z+\zeta_0) = v(z+\zeta_0)$, we get
\begin{equation*}
\begin{aligned}
	\tilde{v}(z+\zeta) - v (z)
	\leq \;& [\tilde{v}(z+\zeta) -\tilde{v}(z+\zeta_0)] + [v(z+\zeta_0) - v(z)] \\
	\leq \;& 2\|\tilde{v}\|_{\Cc^{0,\alpha/2}(\Omb)} \delta^{\alpha/2} + \|v\|_{\Cc^{0,\nu}(\Omb)} \delta^\nu \\
	\leq \;& c_0 \delta^{\nu_1}, \\
\end{aligned}
\end{equation*}
where $c_0:= 2 C \| \fii \|_{\Cc^{0,\alpha}(\Omf)}+ \| v \|_{\Cc^{0,\nu}(\Omb)} >0$.
\end{proof}

Moreover, we can conclude from the last argument that
\begin{equation}\label{boundary holder}
|u(z_1)-u(z_2)| \leq 2 c_0 \delta^{\nu_1},
\end{equation}
for all $z_1,z_2 \in \Omb \setminus \Om_\delta$ such that $|z_1-z_2|\leq \delta$.

\begin{remark}\label{Rem.exten.}
When $\fii\in \Cc^{1,1}(\Omf)$, the last lemma holds for $\nu_1=\nu$. Indeed, let $\tilde{\fii}$ be a $\Cc^{1,1}$-extension of $\fii$ to $\Omb$. We define the plurisuperharmonic Lipschitz function 
  $ \tilde{v}:= -A \rho +\fii$, where $ A \gg 1$ and $\rho$ is the defining function of $\Om$. Hence, the constant $c_0$ in Lemma \ref{extension} will  depend only on $\Om$, $\|\fii \|_{\Cc^{1,1}(\Omf)}$ and $\| v\|_{\Cc^{0,\nu}(\Omb)}$.
\end{remark}

The following weak stability estimate, proved in \cite{GKZ08} for the Lebesgue measure, plays an important role in our work. A similar, but weaker, estimate was established by  Ko{\l}odziej \cite{Ko02} and in the compact setting it was proved by Eyssidieux, Guedj and Zeriahi \cite{EGZ09}. This estimate is still true for any  finite Borel measure $ \mu$ satisfying Condition $\mathcal{H}(\infty)$.

\begin{theorem}\label{stability theorem}
Let $\mu$ be a  finite Borel measure on $\Om$ satisfying Condition $ \mathcal{H}(\infty)$ and
 $0 \leq f \in L^p(\Omega,\mu)$, $p>1$.
Suppose that $v_1,v_2$ are two bounded psh functions in $\Omega$ such that $ \liminf_{z \to \Omf} (v_1-v_2)(z) \geq 0$ and $ (dd^c v_1)^n = f d\mu$.
Fix $r\geq 1$ and  $0 < \gamma <r/(nq+r)$, $1/p+1/q=1$. Then there exists a constant $c_1=c_1(r,\gamma,n,q)>0$ such that 
\begin{equation}\label{stab}
\sup_{\Omega} (v_2-v_1) \leq c_1 (1+ \|f\|^\eta_{L^p(\Om,\mu)}) ||(v_2-v_1)_+||^\gamma_{L^r(\Omega,\mu)},
\end{equation}
where $ (v_2-v_1)_+ =\max(v_2-v_1,0)$ and $ \eta= \frac{1}{n}+ \frac{\gamma q}{r-\gamma(r+nq)}$.
\end{theorem}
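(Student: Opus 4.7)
My approach is to adapt the Kolodziej--Guedj--Zeriahi iteration scheme from \cite{Ko02, GKZ08}, replacing the Lebesgue reference measure by the general measure $\mu$ satisfying Condition $\mathcal{H}(\infty)$. Set $M := \sup_\Omega(v_2 - v_1)$; we may assume $M > 0$. For $0 < t < M$, introduce the open sublevel sets $U(t) := \{v_2 > v_1 + t\}$, which are relatively compact in $\Omega$ by the boundary hypothesis on $v_1 - v_2$. The conclusion amounts to controlling $M$ in terms of $\|(v_2 - v_1)_+\|_{L^r(\Om,\mu)}$; I translate this into controlling the decreasing function $g(t) := [\ca(U(t), \Om)]^{1/n}$ and showing it vanishes past an explicit threshold.

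Three ingredients drive the iteration. First, the comparison principle applied to $v_1 + t$ and $v_2$, combined with H\"older's inequality, yields
\begin{equation*}
\int_{U(t)} (dd^c v_2)^n \leq \int_{U(t)} f\, d\mu \leq \|f\|_{L^p(\Om,\mu)}\, \mu(U(t))^{1/q}.
\end{equation*}
Second, the Kolodziej capacity inequality---obtained by testing against the relative extremal function $u^*_{U(t+s)}$ and applying the comparison principle on the set $\{v_1 + s\, u^*_{U(t+s)} + (t+s) < v_2\}$, while using the current inequality $(dd^c(v_1 + s u^*_{U(t+s)}))^n \geq s^n (dd^c u^*_{U(t+s)})^n$---gives
\begin{equation*}
s^n\, g(t+s)^n \leq \int_{U(t)} (dd^c v_2)^n, \qquad 0 < s < M - t.
\end{equation*}
Third, Condition $\mathcal{H}(\infty)$ with a free parameter $\tilde\tau > 0$ yields $\mu(U(t)) \leq A_{\tilde\tau}\, g(t)^{n(1+\tilde\tau)}$. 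Combining the three produces the Kolodziej contraction
\begin{equation*}
s^n\, g(t+s)^n \leq C\,\|f\|_{L^p(\Om,\mu)}\, A_{\tilde\tau}^{1/q}\, g(t)^{n(1+\tilde\tau)/q},
\end{equation*}
which is a genuine contraction on $g$ once $\tilde\tau$ is taken with $(1+\tilde\tau)/q > 1$.

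The iteration is ignited by the Chebyshev bound $\mu(U(t_0)) \leq t_0^{-r}\|(v_2-v_1)_+\|_{L^r(\Om,\mu)}^r$ at a suitably chosen $t_0 > 0$, which via the first two ingredients yields an initial estimate of $g(t_0)$ in terms of $\|f\|_{L^p(\Om,\mu)}$, $t_0$ and $\|(v_2 - v_1)_+\|_{L^r(\Om,\mu)}$. The Kolodziej de Giorgi-type vanishing lemma---a positive decreasing $g$ satisfying $s^n g(t+s)^n \leq C g(t)^{n(1+\kappa)}$ with $\kappa > 0$ must vanish past an explicit threshold depending on $g(t_0)$, $t_0$, $C$, $\kappa$---then forces $g(t_*) = 0$ with $t_*$ of the required form, giving $M \leq t_*$. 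Careful bookkeeping of the exponents as $\tilde\tau \to \infty$ (allowed by $\mathcal{H}(\infty)$), together with the iteration step $s = t/2$, produces exactly the stated range $\gamma < r/(nq+r)$ and the precise value $\eta = \tfrac{1}{n} + \tfrac{\gamma q}{r - \gamma(r+nq)}$. The main technical obstacle is the second ingredient: since $v_1, v_2$ are only bounded psh with boundary inequality in the $\liminf$ sense, making Kolodziej's capacity inequality rigorous in this generality requires an approximation argument (sup-convolutions or cutoffs) together with continuity of the Monge--Amp\`ere operator along monotone sequences; everything else reduces to the exponent bookkeeping just described.
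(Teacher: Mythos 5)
Your framework is indeed the one the paper alludes to: the paper itself offers no proof of Theorem \ref{stability theorem}, simply stating that the estimate of \cite{GKZ08} ``is still true'' for measures satisfying Condition $\mathcal{H}(\infty)$, and the intended argument is precisely the Ko{\l}odziej--Guedj--Zeriahi iteration with $\mu$ replacing Lebesgue measure. Your first ingredient (comparison principle plus H\"older), your third ingredient, the Chebyshev ignition, and the de~Giorgi--Ko{\l}odziej vanishing lemma are all correctly in place, and the contraction inequality you arrive at is the right one.

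However, your second ingredient is wrong both in derivation and in statement. The asserted inequality $s^n g(t+s)^n \leq \int_{U(t)} (dd^c v_2)^n$ is false in general: take $v_2\equiv 0$ and $v_1=u^*_K$ the relative extremal function of a compact $K\Subset\Omega$; then $\liminf_{\partial\Omega}(v_1-v_2)=0$, the right-hand side vanishes, yet $\ca(U(t+s),\Omega)>0$ for $t+s<1$. The source of the error is that you build the perturbation from $v_1$, setting $\psi=v_1+su^*_{U(t+s)}+(t+s)$ and comparing on $\{\psi<v_2\}$: the comparison principle gives $\int_{\{\psi<v_2\}}(dd^c v_2)^n\leq\int_{\{\psi<v_2\}}(dd^c\psi)^n$, and the current inequality $(dd^c\psi)^n\geq s^n(dd^c u^*_{U(t+s)})^n$ lower-bounds the \emph{right}-hand side, which cannot produce a lower bound on $\int(dd^c v_2)^n$. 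The correct Ko{\l}odziej capacity lemma builds $\psi$ from $v_2$: set $\psi:=v_2-t+su^*_{U(t+s)}$, so that $\liminf_{\partial\Omega}(v_1-\psi)\geq t>0$ and $U(t+s)\subset\{v_1<\psi\}\subset U(t)$; the comparison principle then yields $\int_{\{v_1<\psi\}}(dd^c\psi)^n\leq\int_{\{v_1<\psi\}}(dd^c v_1)^n$, and applying $(dd^c\psi)^n\geq s^n(dd^c u^*_{U(t+s)})^n$ to the \emph{left}-hand side gives directly $s^n\ca(U(t+s),\Omega)\leq\int_{U(t)}(dd^c v_1)^n=\int_{U(t)}f\,d\mu$. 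This also makes your first ingredient superfluous: the Monge--Amp\`ere mass of $v_2$ plays no role and the chain goes straight to $f\,d\mu$. With the lemma corrected, the remainder of your sketch goes through.
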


\section{Proofs of main results}

\begin{theorem}\label{via barrier}
Let $\Om$ be a bounded strongly hyperconvex Lipschitz domain and let $\mu$ be a  finite Borel measure on $\Om$ satisfying Condition  $\mathcal{H}(\infty)$. Suppose that $\fii \in \Cc^{0,\alpha}(\Omf)$, $ 0<\alpha \leq 1$, and $ 0\leq f \in L^p(\Om,\mu)$ for $p>1$.
Then the solution $u$ to $Dir(\Om,\fii,f d\mu)$ is H\"older continuous on $\Omb$ of exponent  $\frac{1}{\lambda}\min \{ \nu, \alpha/2, \tau \gamma\}$, for any $ \gamma < 1/(nq+1)$ and $1/p+1/q=1$, 
if the two following conditions hold: 
\\
(i) there exists  $v \in \Cc^{0,\nu}(\Omb)$, for $0 <\nu \leq 1$, such that $ v \leq u$ in $\Om$ and $ v= \fii$ on $\Omf$,
\\
(ii) and  $ \| \hat{u}_{\delta_1}-u\|_{L^1(\Om_\delta, \mu)} \leq c \delta^{\tau}$, where $c,\tau>0$ and $ 0<\delta_1 =\delta ^\lambda$, for some $\lambda\geq 1$.

Moreover, if $\fii \in \Cc^{1,1}(\Omf)$ then the H\"older exponent of $u$ will be  $\frac{1}{\lambda}\min \{ \nu,  \tau \gamma\}$.
\end{theorem}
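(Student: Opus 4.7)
The plan is to extend $\hat u_{\delta_1}$ to a plurisubharmonic function on all of $\Om$ via Lemma \ref{extension}, apply the stability estimate to compare it with $u$, thereby obtain a uniform sup bound for $\hat u_{\delta_1}-u$ on $\Om_\delta$, and finally transfer this into a pointwise H\"older modulus for $u$ on $\Omb$. Fix $0<\delta_1=\delta^\lambda\leq\delta<\delta_0$ and let $\tilde u_{\delta_1}\in PSH(\Om)\cap\Cc(\Omb)$ be the extension from Lemma \ref{extension}, which equals $u+c_0\delta^{\nu_1}$ on $\Om\setminus\Om_\delta$ with $\nu_1:=\min\{\nu,\alpha/2\}$. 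I then apply Theorem \ref{stability theorem} with $r=1$ to the pair $v_1:=u$ and $v_2:=\tilde u_{\delta_1}-c_0\delta^{\nu_1}$: these functions coincide on $\Om\setminus\Om_\delta$, so the boundary condition $\liminf_{z\to\Omf}(v_1-v_2)(z)\geq 0$ is clear; moreover $(v_2-v_1)_+$ vanishes off $\Om_\delta$ and equals $(\hat u_{\delta_1}-c_0\delta^{\nu_1}-u)_+\leq|\hat u_{\delta_1}-u|$ on $\Om_\delta$, so hypothesis (ii) forces $\|(v_2-v_1)_+\|_{L^1(\Om,\mu)}\leq c\delta^\tau$. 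The stability estimate therefore yields, for every $0<\gamma<1/(nq+1)$,
\begin{equation*}
\sup_{\Om}(\tilde u_{\delta_1}-c_0\delta^{\nu_1}-u)\leq C\delta^{\tau\gamma},
\end{equation*}
and since $\tilde u_{\delta_1}\geq\hat u_{\delta_1}$ on $\Om_\delta$, this implies
\begin{equation*}
\sup_{\Om_\delta}(\hat u_{\delta_1}-u)\leq C\delta^{\tau\gamma}+c_0\delta^{\nu_1}\leq C'\delta^{s},\qquad s:=\min\{\nu_1,\tau\gamma\}.
\end{equation*}

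The next step is to convert this $L^\infty$-bound into a pointwise H\"older estimate $u(z+\zeta)-u(z)\leq C|\zeta|^{s/\lambda}$. I will invoke the classical equivalence of \cite{GKZ08} (see also \cite{Ch15}) between sup bounds on $\hat u_r-u$ and sup bounds on $u_r-u$ for a bounded plurisubharmonic function; applied here and rewritten in terms of $\delta_1=\delta^\lambda$, it gives $\sup_{\Om_{\delta_1}}(u_{\delta_1}-u)\leq C''\delta_1^{s/\lambda}$, the desired interior H\"older estimate. To propagate this to $\Omb$, I combine it with the boundary estimate (\ref{boundary holder}) by a standard gluing argument: given two points $z_1,z_2\in\Omb$ with $|z_1-z_2|=r$ small, set $\delta\simeq r$, handle the deep-interior and boundary-layer cases separately, and treat the mixed case by joining $z_1$ to $z_2$ through $\partial\Om_\delta$. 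Since $\lambda\geq 1$ gives $s/\lambda\leq\nu_1$, the interior exponent dictates the final one, and the global H\"older exponent of $u$ becomes $\frac{1}{\lambda}\min\{\nu,\alpha/2,\tau\gamma\}$.

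The main obstacle I expect is the passage from the $L^\infty$-bound on $\hat u_{\delta_1}-u$ to the $L^\infty$-bound on $u_{\delta_1}-u$: this is not elementary for merely bounded psh functions, and relies on the subharmonic mean-value arguments of \cite{GKZ08}. Once this classical ingredient is used, the rest of the proof is a direct combination of Lemma \ref{extension} and Theorem \ref{stability theorem}. Finally, when $\fii\in\Cc^{1,1}(\Omf)$, Remark \ref{Rem.exten.} allows us to set $\nu_1=\nu$ in the extension of $\hat u_{\delta_1}$, so that $s=\min\{\nu,\tau\gamma\}$ and the resulting H\"older exponent on $\Omb$ improves to $\frac{1}{\lambda}\min\{\nu,\tau\gamma\}$, as claimed.
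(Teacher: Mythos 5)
Your proposal follows essentially the same route as the paper's proof: extend $\hat u_{\delta_1}$ to a global psh function via Lemma \ref{extension}, feed the pair into the stability estimate (Theorem \ref{stability theorem}) with $r=1$, deduce $\sup_{\Om_\delta}(\hat u_{\delta_1}-u)\lesssim\delta^{\min\{\nu_1,\tau\gamma\}}$ from hypothesis (ii), rescale $\delta\mapsto\delta^{1/\lambda}$, pass from solid means $\hat u_\delta$ to $u_\delta$ via Lemma~4.2 of \cite{GKZ08}, and glue with the boundary estimate (\ref{boundary holder}). The only nominal difference is the choice of comparison pair: you take $v_1=u$, $v_2=\tilde u_{\delta_1}-c_0\delta^{\nu_1}$, whereas the paper takes $v_1=u+c_0\delta^{\nu_1}$, $v_2=\tilde u_{\delta_1}$; these differ by a common constant shift and produce exactly the same inequality. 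Your final paragraph is, if anything, slightly more explicit than the paper about reconciling the interior region $\Om_{\delta^{1/\lambda}}$ with the boundary layer and the mixed case.
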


\begin{proof}
It follows from Lemma \ref{extension} that there exist $c_0>0$ and $\delta_0>0$ so that 
\begin{equation*}
 	\tilde {u}_{\delta_1}
 		= \begin{cases}
			\max\{\hat{u}_{\delta_1}, u+ c_0 \delta^{\nu_1} \} & \text{ in } \; \Om_\delta, \\
                   u + c_0 \delta^{\nu_1} & \text{ in } \; \Om\setminus \Omega_\delta,
           \end{cases}
\end{equation*}
belongs to $PSH(\Om)\cap \Cc(\Omb)$, for $ 0<\delta_1 \leq \delta < \delta_0$ and $\nu_1=\min\{ \nu,\alpha/2\}$.
\\
By applying Theorem \ref{stability theorem} with $v_1 := u+c_0 \delta^{\nu_1}$ and $v_2 :=  \tilde{u}_{\delta_1}$,  we infer that
$$
\sup_{\Om_\delta} (\hat{u}_{\delta_1}-u-c_0 \delta^{\nu_1}) \leq \sup_\Om (\tilde{u}_{\delta_1} -u -c_0 \delta^{\nu_1}) \leq c_1 (1+\|f\|_{L^p(\Om,\mu)}^{\eta}) \| (\tilde{u}_{\delta_1} -u -c_0 \delta^{\nu_1})_+ \|_{L^1(\Om,\mu)}^{\gamma},
$$
where  $\eta := 1/n + \gamma q/[1-\gamma(1+nq)]$, $c_1= c_1(n,q,\gamma)$ and $0 < \gamma < 1/(nq+1)$ is fixed.
Since $ \tilde{u}_{\delta_1} = u+ c_0 \delta^{\nu_1}$ in $\Om \setminus \Om_\delta$ and 
$$
\|(\tilde u_{\delta_1} -u - c_0 \delta^{\nu_1})_+\|_{L^1(\Om,\mu)} \leq \| \hat{u}_{\delta_1} -u \|_{L^1(\Om_\delta, \mu)},
$$
 we conclude that
$$
\sup_{\Om_\delta} (\hat{u}_{\delta_1}-u) \leq c_0 \delta^{\nu_1} + c_1 (1+\|f\|_{L^p(\Om,\mu)}^{\eta}) \| \hat{u}_{\delta_1} -u  \|_{L^1(\Om_\delta,\mu)}^{\gamma}.
$$
By hypotheses we have
$$
\sup_{\Om_\delta} (\hat{u}_{\delta_1} -u) \leq c_0 \delta^{\nu_1} + c_1 c^\gamma (1+\|f\|_{L^p(\Om,\mu)}^{\eta}) \delta^{\tau \gamma}.
$$
Let us set $c_2:= (c_0+c_1 c^\gamma)(1+\|f\|_{L^p(\Om,\mu)}^{\eta})$. We derive from the last inequality that
$$
\sup_{\Om_\delta} (\hat{u}_{\delta_1}-u) \leq c_2 \delta^{\min\{ \nu_1, \tau \gamma \}}.
$$
This means that 
$$ 
\hat{u}_\delta - u \leq c_2 \delta^{\frac{1}{\lambda}\min\{ \nu_1, \tau \gamma \}}  \text{ in } \Om_{\delta^{1/\lambda}}.
$$
Hence, by Lemma 4.2 in \cite{GKZ08}, there exists $c_3,\tilde{\delta}_0>0$ such that for all $0<\delta<\tilde{\delta}_0$ we have
\begin{equation}\label{holder int}
u_\delta -u \leq c_3 \delta^{\frac{1}{\lambda}\min\{ \nu_1, \tau \gamma \}} \text{ in } \Omega_{\delta^{1/\lambda}}.
\end{equation}

Thus, (\ref{holder int}) and  (\ref{boundary holder}) yield the H\"older continuity of $u$ on $\Omb$ of exponent $\frac{1}{\lambda}\min \{ \nu, \alpha/2, \tau \gamma\}$,  for any $ \gamma < 1/(nq+1)$ and $1/p+1/q=1$.

Finally, if $\fii \in \Cc^{1,1}(\Omf)$, we get that the H\"older exponent is 
$\frac{1}{\lambda}\min \{ \nu,  \tau \gamma\}$, since $\nu_1=\nu$  (see Remark \ref{Rem.exten.}).
\end{proof}

The first step in the proof of Theorem \ref{main2 C11} is to  estimate  $ \|\hat{u}_{\delta} - u \|_{ L^1(\Om_\delta,\mu)}$, so we present the following lemma.

\begin{lemma}\label{Laplacian2}
Let $\Om \subset \C^n$ be a strongly hyperconvex Lipschitz domain and let $\mu$ be a Hausdorff-Riesz measure of order $2n-2+\epsilon$ on $\Om$ for $ 0< \epsilon \leq 2$. Suppose that $ \fii \in \Cc^{1,1}(\Omf)$ and $f \in L^p(\Om,\mu)$, $p>1$, then the solution $u$ to the Dirichlet problem $Dir(\Om,\fii,f d\mu)$ satisfies
$$
\int_{\Omega_{\delta}} [\hat{u}_{\delta}(z)- u(z)] d \mu (z) \leq C \delta^\epsilon, 
$$
where  $C $ is a positive  constant depending on $n$, $\epsilon$, $\Om$, $\|f\|_{L^p(\Om,\mu)}$ and $ \mu(\Om) $.
\end{lemma}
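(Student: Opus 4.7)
The plan is to turn the difference $\hat u_\delta(z)-u(z)$ into an integral of the Riesz mass $\mu_\Delta:=\Delta u$ on small balls, then swap the order of integration and use the Hausdorff--Riesz growth of $\mu$ together with Proposition \ref{mass laplace C11} to bound the total Laplacian mass.

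First, I would recall the standard Riesz representation of the solid mean: for any function $u\in PSH(\Om)\cap L^\infty_{\mathrm{loc}}$ and any $z\in\Om_\delta$,
\begin{equation*}
\hat u_\delta(z)-u(z)\;=\;\frac{1}{\tau_{2n}\delta^{2n}}\int_{B(z,\delta)}[u(\zeta)-u(z)]\,dV_{2n}(\zeta)\;\leq\;c_n\int_0^\delta \frac{\mu_\Delta(B(z,t))}{t^{2n-1}}\,dt,
\end{equation*}
which follows from writing the solid mean as a weighted integral of spherical means and using the identity $\tfrac{d}{dt}M_u(z,t)=\sigma_{2n-1}^{-1}t^{1-2n}\mu_\Delta(B(z,t))$ for subharmonic $u$, followed by a Fubini swap on $0<s<t<\delta$. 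The constant $c_n$ depends only on $n$.

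Next, I would integrate over $z\in\Om_\delta$ against $d\mu$ and apply Fubini:
\begin{equation*}
\int_{\Om_\delta}[\hat u_\delta(z)-u(z)]\,d\mu(z)\;\leq\;c_n\int_0^\delta \frac{dt}{t^{2n-1}}\int_\Om \mu\bigl(B(\zeta,t)\cap\Om_\delta\bigr)\,d\mu_\Delta(\zeta).
\end{equation*}
For $0<t<\delta<1$ the Hausdorff--Riesz hypothesis (\ref{Hausdorff Riesz}) on $\mu$ gives $\mu(B(\zeta,t)\cap\Om_\delta)\leq\mu(B(\zeta,t)\cap\Om)\leq C\,t^{2n-2+\epsilon}$ uniformly in $\zeta\in\Omb$. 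Substituting this bound yields
\begin{equation*}
\int_{\Om_\delta}[\hat u_\delta-u]\,d\mu\;\leq\;c_nC\,\|\mu_\Delta\|_\Om\int_0^\delta t^{\epsilon-1}\,dt\;=\;\frac{c_nC}{\epsilon}\,\|\Delta u\|_\Om\,\delta^\epsilon.
\end{equation*}

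It remains to control $\|\Delta u\|_\Om$. Since $\fii\in\Cc^{1,1}(\Omf)$ and, by Lemma \ref{dominated by cap}, the measure $f\,d\mu$ satisfies Condition $\mathcal{H}(\tau)$ for any $\tau>0$, Proposition \ref{mass laplace C11} applies to $u$ and furnishes a finite bound for $\int_\Om\Delta u$ in terms of $n$, $\Om$ and the total mass $(f\,d\mu)(\Om)\leq\|f\|_{L^p(\Om,\mu)}\mu(\Om)^{1/q}$. Combining this with the preceding estimate gives the claimed inequality with $C$ depending on $n,\epsilon,\Om,\|f\|_{L^p(\Om,\mu)}$ and $\mu(\Om)$.

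The only delicate point is the first step: making sure the Riesz representation produces exactly the factor $t^{1-2n}\mu_\Delta(B(z,t))$ so that the power of $t$ arising from (\ref{Hausdorff Riesz}) leaves a strictly positive exponent $\epsilon-1>-1$, and that the total Laplacian mass is genuinely finite, which is precisely what Proposition \ref{mass laplace C11} provides. Once these two ingredients are in place, Fubini and the Hausdorff--Riesz bound do the rest.
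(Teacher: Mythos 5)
Your proof is correct and follows essentially the same route as the paper: Poisson--Jensen (Riesz) representation of the solid mean in terms of $\int_0^\delta t^{1-2n}\Delta u(B(z,t))\,dt$, a Fubini swap to move the $d\mu$-integration inside, the Hausdorff--Riesz bound $\mu(B(\zeta,t))\leq C\,t^{2n-2+\epsilon}$ to produce the integrable power $t^{\epsilon-1}$, and Proposition~\ref{mass laplace C11} (via Lemma~\ref{dominated by cap}) to bound $\|\Delta u\|_\Om$. The only cosmetic difference is that you bound $\delta^{2n}-t^{2n}\leq\delta^{2n}$ before the Fubini step, whereas the paper keeps the double $r,t$-integral explicit and evaluates it at the end, giving the constant $c_n/[\epsilon(2n+\epsilon)]$ instead of $c_n/\epsilon$; the conclusion and dependencies of $C$ are identical.
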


\begin{proof}
Let us denote by $\sigma_{2n-1}$ the surface measure of the unit sphere.
It follows from the Poisson-Jensen formula, for $ z \in \Om_\delta$ and $0<r< \delta$, that
$$
\frac{1}{\sigma_{2n-1} r^{2n-1}} \int_{\partial B(z,r)} u(\xi) d\sigma(\xi) -u(z) = c_n \int_0^r t^{1-2n} \left( \int_{B(z,t)} \Delta u(\xi) \right) dt.
$$
Using  polar coordinates we obtain for $ z \in \Om_\delta$,
$$
\hat u_{\delta} (z) - u (z) = \frac{c_n}{ \delta^{2n}} \int_{0}^{\delta} r^{2n-1}dr \int_{0}^r t^{1-2n} dt \left( \int_{B(z,t)} \Delta u(\xi) \right) .
$$
Now, we integrate on $\Om_\delta$ with respect to $d\mu$ and use Fubini's theorem
\begin{equation*}
\begin{aligned}
\int_{\Omega_{\delta}} [\hat u_{\delta} (z) - u (z)] d\mu (z) & = \frac{c_n}{ \delta^{2n}} \int_{\Om_\delta}  \int_{0}^{\delta} r^{2n-1}dr \int_{0}^r t^{1-2n} dt \left( \int_{|\xi-z|\leq t} \Delta u(\xi)  \right) d\mu(z)\\
& = \frac{c_n}{ \delta^{2n}}   \int_{0}^{\delta} r^{2n-1}dr \int_{0}^r t^{1-2n} dt \int_{\Om_\delta} \left( \int_{B(z,t)} \Delta u(\xi)  \right) d\mu(z)\\
& \leq \frac{c_n}{ \delta^{2n}}   \int_{0}^{\delta} r^{2n-1}dr \int_{0}^r t^{1-2n} dt \int_{\Om} \left( \int_{B(\xi,t)} d\mu(z)  \right) \Delta u(\xi)  \\
& \leq \frac{c_n}{\epsilon(2n+\epsilon)} \left( \int_\Om \Delta u \right) \delta^\epsilon .\\
\end{aligned}
\end{equation*}
By Proposition \ref{mass laplace C11}, the total mass of $\Delta u$ is finite in $\Om$ and this completes the proof. 
\end{proof}

When $\fii$ is not $\Cc^{1,1}$-smooth, the measure $\Delta u$ may have infinite mass on $\Omega$. 
Fortunately, we can estimate  $\|\hat u_{\delta_1}  - u \|_{L^1(\Om_\delta,\mu)}$ for some $\delta_1 < \delta \leq 1$.
 
\begin{lemma}\label{Laplacian3}
Let $\Om \subset \C^n$ be a strongly hyperconvex Lipschitz domain and let $\mu$ be a Hausdorff-Riesz measure of order $2n-2+\epsilon$ on $\Om$, for $ 0< \epsilon \leq 2$. Suppose that $ 0 \leq f \in L^p(\Om,\mu)$, $p>1$ and $\fii \in \Cc^{0,\alpha}(\Omf)$, $\alpha \leq 1$.
Then for any small $\epsilon_1>0$, we have the following inequality 
$$
\int_{\Omega_{\delta}} [\hat u_{\delta_1} (z) - u (z)] d\mu (z) \leq C \delta^{\epsilon/2- \epsilon_1}, 
$$
where   $ \delta_1= (1/2)\delta^{1/2+3/\epsilon}$ and $C$ is a positive  constant depending on $n$, $\Om$, $\epsilon$, $\epsilon_1$ and  $\| u \|_{L^\infty(\Omb)}$.
\end{lemma}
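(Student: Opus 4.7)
The plan is to adapt the argument of Lemma~\ref{Laplacian2}. Since $\fii$ is only H\"older and Proposition~\ref{mass laplace C11} does not apply, $\int_\Om\Delta u$ may well be infinite; I therefore localize the Poisson--Jensen computation to a neighbourhood of $\Om_\delta$ and replace the global Laplacian mass bound by a polynomial-in-$\delta$ local bound derived from the $L^\infty$-norm of $u$ alone.

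First I would repeat the first half of the proof of Lemma~\ref{Laplacian2} verbatim with $\delta_1$ in place of $\delta$: apply the Poisson--Jensen formula to $u$ at each $z\in\Om_{\delta_1}$, integrate against $d\mu$ over $\Om_\delta$, swap the order of integration via Fubini, and invoke the Hausdorff--Riesz bound $\mu(B(\xi,t)\cap\Om)\le Ct^{2n-2+\epsilon}$. Since $1/2+3/\epsilon\ge 1$ for $0<\epsilon\le 2$, one has $\delta_1=\tfrac12\delta^{1/2+3/\epsilon}\le\delta/2$ for $\delta\le 1$, so the inner variable $\xi$ stays in $\Om_{\delta-\delta_1}\subset\Om_{\delta/2}$. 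This reduces the problem to
\[
\int_{\Om_\delta}\bigl(\hat u_{\delta_1}-u\bigr)\,d\mu\;\leq\;\frac{C\,\delta_1^{\epsilon}}{\epsilon(2n+\epsilon)}\int_{\Om_{\delta/2}}\Delta u.
\]

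The heart of the proof is then an estimate of the form
\[
\int_{\Om_s}\Delta u\;\leq\;\frac{C\,\|u\|_{L^\infty(\Omb)}}{s^{3+\epsilon_1}},\qquad 0<s\le 1,
\]
valid for any fixed small $\epsilon_1>0$. I would derive it by testing $\Delta u$ against the weight $(-\tilde\rho)^{\sigma}$ with $\sigma=\tfrac{3+\epsilon_1}{2}$, where $\tilde\rho$ is the Lipschitz psh defining function from Lemma~\ref{lemma tilde rho}. After a standard regularization (smooth psh approximation of $\tilde\rho$ and cutting off near $\Omf$), the integration by parts
\[
\int_\Om(-\tilde\rho)^\sigma\,\Delta u\;=\;\int_\Om u\,\Delta(-\tilde\rho)^\sigma
\]
yields $\int_\Om(-\tilde\rho)^\sigma\Delta u\le C\|u\|_\infty$, because $\Delta(-\tilde\rho)^\sigma$ is a finite signed measure on $\Omb$: its density $\sigma(\sigma-1)(-\tilde\rho)^{\sigma-2}|\nabla\tilde\rho|^2$ is integrable since $|\nabla\tilde\rho|\in L^\infty$ and, by $(-\tilde\rho)\ge\dist(\cdot,\Omf)^2$, $(-\tilde\rho)^{\sigma-2}\le\dist^{\,2\sigma-4}$ with $2\sigma-4>-1$, while the singular part $-\sigma(-\tilde\rho)^{\sigma-1}\Delta\tilde\rho$ is controlled using $\int_\Om\Delta\tilde\rho<\infty$ (Green's formula applied to the Lipschitz $\tilde\rho$). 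Since $(-\tilde\rho)\ge s^2$ on $\Om_s$, this converts to the claimed polynomial bound, and inserting $\delta_1^\epsilon=2^{-\epsilon}\delta^{\epsilon/2+3}$ into the previous display produces exactly $C\delta^{\epsilon/2-\epsilon_1}$.

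The main obstacle is the polynomial Laplacian bound: without $\Cc^{1,1}$ boundary data there is no Cegrell-type global bound on $\int_\Om\Delta u$, and one must carefully balance the degeneracy of the weight $(-\tilde\rho)^{-\sigma}$ near $\Omf$ against the concentration of $\Delta u$ there. The exponent $3+\epsilon_1$ is exactly the price of using a merely Lipschitz (rather than $\Cc^{1,1}$) defining function, for which only $(-\tilde\rho)\gtrsim\dist^2$ is guaranteed by Lemma~\ref{lemma tilde rho}; the freedom to take $\epsilon_1>0$ arbitrarily small is what makes the integration by parts go through and, via Theorem~\ref{via barrier}, feeds into the H\"older exponent of Theorem~\ref{main2 C0,alpha}.
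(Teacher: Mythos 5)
Your proposal follows the paper's proof essentially step for step: Poisson--Jensen plus Fubini and the Hausdorff--Riesz bound reduce matters to a local Laplacian estimate, the weight $(-\tilde\rho)^{(3+\epsilon_1)/2}$ is inserted exactly as in the paper, and the finiteness of $\|(-\tilde\rho)^{(3+\epsilon_1)/2}\Delta u\|_\Om$ is established by integration by parts using $-\tilde\rho\gtrsim\dist^2$ and the boundedness of $u$. The only cosmetic difference is that you phrase the integration by parts as a single Green's second identity with both boundary terms vanishing (and regularize $\tilde\rho$), whereas the paper mollifies $u$ and performs two sequential Stokes integrations on $\Om_\eta$, tracking the boundary terms explicitly; both give the same bound.
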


\begin{proof}
One sees as in the proof of Lemma \ref{Laplacian2} that
$$
\hat u_{\delta_1} (z) - u (z) = \frac{c_n}{ \delta_1^{2n}} \int_{0}^{\delta_1} r^{2n-1}dr \int_{0}^r t^{1-2n} dt \left( \int_{B(z,t)} \Delta u(\xi) \right) .
$$
Then, we integrate on $\Om_\delta$ with respect to $\mu$ and use Fubini's Theorem

\begin{equation*}
\begin{aligned}
\int_{\Omega_{\delta}} [\hat u_{\delta_1} (z) - u (z)] d\mu (z) &  = \frac{c_n}{\delta_1^{2n}}   \int_{0}^{\delta_1} r^{2n-1}dr \int_{0}^r t^{1-2n} dt \int_{\Om_\delta} \left( \int_{B(z,t)} \Delta u(\xi) \right) d\mu(z)\\
& \leq \frac{c_n}{\delta_1^{2n}}   \int_{0}^{\delta_1} r^{2n-1}dr \int_{0}^r t^{1-2n} dt \int_{\Om_{\delta-t}} \left( \int_{B(\xi,t)} d\mu(z)  \right) \Delta u(\xi) \\
& \leq \frac{c_n}{\delta_1^{2n}}   \int_{0}^{\delta_1} r^{2n-1}dr \int_{0}^r t^{-1+\epsilon} dt \int_{\Om_{\delta-t}} \Delta u(\xi) \\
& \leq \frac{c_n}{\delta_1^{2n}}  \sup_{\Om_{\delta- \delta_1}} (-\tilde\rho)^{-(3+\epsilon_1)/2} \int_{0}^{\delta_1} r^{2n-1}dr \int_{0}^r t^{-1+\epsilon} dt \int_{\Om_{\delta-t}} (-\tilde\rho)^{(3+\epsilon_1)/2} \Delta u(\xi) \\
& \leq \frac{c_n}{\delta_1^{2n}}  \sup_{\Om_{\delta/2}} (-\tilde\rho)^{-(3+\epsilon_1)/2} \|(-\tilde\rho)^{(3+\epsilon_1)/2} \Delta u \|_{\Om} \int_{0}^{\delta_1} r^{2n-1}dr \int_{0}^r t^{-1+\epsilon} dt \\
& \leq \frac{4 c_n \delta^{-3-\epsilon_1}}{\epsilon(2n+\epsilon)} \delta_1^{\epsilon} \|(-\tilde\rho)^{(3+\epsilon_1)/2} \Delta u \|_{\Om}  \\
& \leq  C_1  \delta^{ \epsilon/2 -\epsilon_1} \|(-\tilde\rho)^{(3+\epsilon_1)/2} \Delta u \|_{\Om}, \\
\end{aligned}
\end{equation*}
where $\tilde \rho \in PSH(\Om)\cap \Cc^{0,1}(\Omb) $ is as in Lemma \ref{lemma tilde rho} and $C_1$ is a positive constant depending on $n$ and $\epsilon$.
To complete the proof we demonstrate that the mass $\|(-\tilde\rho)^{(3+\epsilon_1)/2} \Delta u \|_{\Om}$ is finite.
The following idea is due to \cite{BKPZ15} with some appropriate modifications. We set for simplification $ \theta:=(3+\epsilon_1)/2$.
Let $\rho_\eta$ be the standard  regularizing kernels 
with supp $ \rho_\eta \subset B(0,\eta) $ and 
$ \int_{B(0,\eta)} \rho_\eta dV_{2n} = 1 $. Hence,  $ u_\eta = u* \rho_\eta \in \Cc^\infty \cap PSH(\Om_\eta) $ decreases to $u$ in $\Omega$. It is clear that $\|u_\eta \|_{L^\infty(\Om_\eta)} \leq \|u\|_{L^\infty(\Om)}$ and  the first dirivatives of $u_\eta$ have  $L^\infty$-norms less than $\|u\|_{L^\infty(\Om)}/\eta $. We denote by $\chi_{\Om_\eta}$ the characteristic function of $\Om_\eta$ and by $\rho $ the defining function of $\Om$.  Since $u_\eta \searrow u$ in $\Om$, we have $ \chi_{\Om_\eta} (-\tilde\rho)^{\theta} \Delta u_\eta $ converges to $(-\tilde\rho)^{\theta} \Delta u$ in the weak sense of measures.

It is sufficient to show  that 
$$
I:= \int_{\Om_\eta} (-\tilde\rho)^{\theta} \Delta u_\eta,
$$
is bounded by an absolute constant independent of $\eta$. 
We have by Stokes' theorem
\begin{equation*}
\begin{aligned}
I  & = \int_{\Omf_\eta} (-\tilde\rho)^{\theta} d^c u_\eta \wedge \beta^{n-1} +\theta \int_{\Om_\eta} (-\tilde\rho)^{\theta-1} d\tilde\rho \wedge d^c u_\eta \wedge \beta^{n-1}  .  \\
\end{aligned}
\end{equation*}
Note that 
\begin{equation*}
\begin{aligned}
\int_{\Omf_\eta} (-\tilde\rho)^{\theta-1} u_\eta d^c \tilde\rho \wedge \beta^{n-1} & = \int_{\Omega_\eta} (-\tilde\rho)^{\theta-1} d u_\eta \wedge d^c \tilde\rho \wedge \beta^{n-1} + \\
&+ \int_{\Om_\eta} (-\tilde\rho)^{\theta-1}  u_\eta  dd^c \tilde\rho \wedge \beta^{n-1}  \\
& - (\theta-1) \int_{\Om_\eta} (-\tilde\rho)^{\theta -2}  u_\eta  d \tilde\rho \wedge d^c \tilde\rho \wedge \beta^{n-1}.\\
\end{aligned}
\end{equation*}
Hence, we get
\begin{equation*}
\begin{aligned}
I & = \int_{\Omf_\eta} (-\tilde\rho)^{\theta} d^c u_\eta \wedge \beta^{n-1}  +\theta \int_{\Omf_\eta} (-\tilde\rho)^{\theta-1} u_\eta d^c \tilde\rho \wedge \beta^{n-1}\\
& - \theta \int_{\Om_\eta} (-\tilde\rho)^{\theta-1}  u_\eta  dd^c \tilde\rho \wedge \beta^{n-1} + \theta(\theta-1)  \int_{\Om_\eta} (-\tilde\rho)^{\theta-2}  u_\eta  d \tilde\rho \wedge d^c \tilde\rho \wedge \beta^{n-1} \\
& \leq C \|u\|_{L^\infty(\Omb)} \left( \int_{\Omf_\eta} d \sigma +\int_{\Om_\eta} dd^c \tilde\rho \wedge \beta^{n-1}+\int_{\Om_\eta} (-\tilde\rho)^{\theta-2}   \beta^{n} \right), \\
& \leq C \|u\|_{L^\infty(\Omb)} \left( \int_{\Omf_\eta} d \sigma +\int_{\Om} dd^c \rho \wedge \beta^{n-1}+\int_{\Om} (-\tilde\rho)^{(-1+\epsilon_1)/2}   \beta^{n} \right), \\
\end{aligned}
\end{equation*}
where $d \sigma= d^c \rho \wedge (dd^c \rho )^{n-1}$ and $\rho$ is the defining function of $\Om$. Since $\rho$ is psh in a neighborhood of $\Omb$, the second integral in the last inequality is finite. Thanks to Lemma \ref{lemma tilde rho}, we have $-\tilde\rho \geq  \dist(.,\Omf)^2$ near $\Omf$ and so the third integral will be finite since $\epsilon_1>0$ small enough. Consequently,  we infer that $I$ is bounded by a constant  independent of $\eta$ and then this proves our claim.
\end{proof}

\begin{corollary}\label{Laplacian 3 strongly}
When $\Omega$ is a smooth strongly pseudoconvex domain, then Lemma \ref{Laplacian3} holds also for $ \delta_1 = (1/2) \delta^{1/2+1/\epsilon}$.
\end{corollary}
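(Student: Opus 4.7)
The plan is to repeat the computation of Lemma \ref{Laplacian3} essentially verbatim, but with two modifications that exploit the smoothness of $\partial\Omega$. First, I would replace $\tilde\rho$ throughout with the usual defining function $\rho$. By Remark \ref{rho smooth}, in the smooth strongly pseudoconvex case we have $-\rho \approx \dist(\cdot,\Omf)$ near $\Omf$, i.e. a \emph{linear} boundary behavior instead of the quadratic lower bound in Lemma \ref{lemma tilde rho}. Second, I would take a smaller weight $\theta := 1+\epsilon_1$ in place of $(3+\epsilon_1)/2$, since the requirement ``$(-\rho)^{\theta-2}$ is integrable against $\beta^n$'' now becomes $\int_\Om \dist(\cdot,\Omf)^{\theta-2}\,dV < +\infty$, which is fulfilled as soon as $\theta > 1$.

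With these choices, the Poisson-Jensen plus Fubini step of Lemma \ref{Laplacian3} gives
\begin{equation*}
\int_{\Omega_{\delta}}[\hat u_{\delta_1}(z)-u(z)]\,d\mu(z)
\leq \frac{c_n}{\delta_1^{2n}}\sup_{\Om_{\delta/2}}(-\rho)^{-\theta}\,\bigl\|(-\rho)^{\theta}\Delta u\bigr\|_\Om \int_0^{\delta_1}r^{2n-1}\,dr\int_0^r t^{-1+\epsilon}\,dt.
\end{equation*}
Using $-\rho \approx \dist(\cdot,\Omf)$, the supremum is now of order $\delta^{-\theta} = \delta^{-1-\epsilon_1}$ rather than $\delta^{-3-\epsilon_1}$, and the double integral contributes $\delta_1^{2n+\epsilon}/(\epsilon(2n+\epsilon))$. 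Putting these together yields a bound of the form $C\,\delta^{-1-\epsilon_1}\delta_1^\epsilon$, and requiring this to be $\lesssim \delta^{\epsilon/2-\epsilon_1}$ forces $\delta_1^\epsilon \lesssim \delta^{\epsilon/2+1}$, i.e. $\delta_1 = (1/2)\delta^{1/2+1/\epsilon}$, which is exactly the claimed improvement.

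It remains to check that the mass $\|(-\rho)^{1+\epsilon_1}\Delta u\|_\Om$ is finite. I would carry out the same regularization $u_\eta := u*\rho_\eta$ and Stokes' theorem computation as in Lemma \ref{Laplacian3}, simply replacing $\tilde\rho$ by $\rho$ and $\theta = (3+\epsilon_1)/2$ by $\theta = 1+\epsilon_1$. The boundary terms on $\partial\Om_\eta$ are controlled by $\|u\|_{L^\infty(\Omb)}$ and the finite surface integral of $d\sigma = d^c\rho\wedge(dd^c\rho)^{n-1}$ (which uses smoothness of $\partial\Omega$), the first interior term reduces as before to $\int_\Omega dd^c\rho\wedge\beta^{n-1} < +\infty$, and the last interior term reduces to $\int_\Omega(-\rho)^{\theta-2}\beta^n$, which is finite because $\theta - 2 = -1+\epsilon_1 > -1$ and $-\rho \approx \dist(\cdot,\Omf)$.

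The only nontrivial point is the bookkeeping of this last integral under the linear rather than quadratic bound on $-\rho$; everything else is a direct transcription of the argument of Lemma \ref{Laplacian3}. In particular, no new analytic input beyond Remark \ref{rho smooth} is needed, and the proof concludes by letting $\eta \to 0$ as before.
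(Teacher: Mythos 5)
Your proposal is correct and follows exactly the route the paper takes: replace $\tilde\rho$ by the smooth defining function $\rho$, use the linear lower bound $-\rho\approx\dist(\cdot,\Omf)$ from Remark \ref{rho smooth}, lower the weight to $\theta=1+\epsilon_1$, verify $\|(-\rho)^{1+\epsilon_1}\Delta u\|_\Om<+\infty$ via the same regularization and Stokes' argument, and read off $\delta_1=(1/2)\delta^{1/2+1/\epsilon}$ from the exponent bookkeeping. The paper states this only as a two-sentence sketch; your write-up supplies the same argument with the computations filled in, and the exponent arithmetic ($\delta^{-1-\epsilon_1}\cdot\delta_1^\epsilon\lesssim\delta^{\epsilon/2-\epsilon_1}$) checks out.
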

\begin{proof}
Let $\rho$ be the defining function of $\Om$. In view of  Remark \ref{rho smooth} and the last argument, we can estimate $\|(-\rho)^{1+\epsilon_1} \Delta \U \|_{\Om}$, for $\epsilon_1>0$ small enough. So the proof of Lemma \ref{Laplacian3} is still true for  $\delta_1:= (1/2) \delta^{1/2+1/\epsilon}$.
\end{proof}

We are now in a position to prove the main theorems. We begin to prove the H\"older continuity of the solution to $Dir(\Om,\fii,fd\mu)$ where $\mu$ is a Hausdorff-Riesz measure of order $2n-2+\epsilon$ and $\fii\in \Cc^{1,1}(\Omf)$.

\begin{proof}[\bf Proof of Theorem \ref{main2 C11}.]
We first assume that $f$ equals to zero near the boundary $\Omf$, that is, there exists a compact $ K \Subset \Om$ such that $f=0$ on $\Omega \setminus K$.
Since $\fii \in \Cc^{1,1}(\Omf)$, we extend it to $\tilde{\fii} \in \Cc^{1,1}(\Omb)$ such that 
$\|\tilde{\fii} \|_{\Cc^{1,1}(\Omb)} \leq C \| \fii\|_{\Cc^{1,1}(\Omf)}$ for some constant $C$. Let $\rho$ be the defining function of $ \Om$ and let $ A \gg 1$ be so that $ v:= A \rho +\tilde{\fii} \in PSH(\Om)$ and $ v \leq u $ in a neighborhood of $K$. Moreover, by the comparison principle, we see that $ v \leq u $ in $\Om \setminus K$. Consequently, $ v \in PSH(\Om) \cap \Cc^{0,1}(\Omb)$ and satisfies  $ v \leq u $ on $\Omb$ and $ v=u=\fii$ on $\Omf$.
 It follows from Theorem \ref{via barrier} and Lemma \ref{Laplacian2} that  $ u \in \Cc^{0,\epsilon \gamma}(\Omb)$, for any  $ 0< \gamma < 1/(nq+1)$.
 
In the general case, fix a large ball $ B \subset \C^n$ containing $ \Omega $ and define a function $\tilde{f} \in L^p(B,\mu)$ so that $ \tilde{f}:= f $ in $\Omega$ and $ \tilde{f}:=0$ in $ B \setminus \Omega$. Hence, the solution to the following Dirichlet problem
\begin{center}
$ \left\{\begin{array}{ll}
            v_1\in PSH(B)\cap\Cc(\bar{ B}),& \\
           (dd^c v_1)^n=\tilde{f} d\mu  & \text{  in  } \; B, \\
            v_1=0  & \text{ on  }  \partial B, \\
         \end{array} \right. $

\end{center}
belongs to $\Cc^{0,\gamma'}(\bar{B})$, with  $\gamma' =  \epsilon \gamma$ for any $\gamma < 1/(nq+1)$.
\\
Let $ h_{\fii-v_1}$ be the continuous solution  to $Dir(\Om, \fii-v_1, 0)$. Then, Theorem A in \cite{Ch15} implies that $ h_{\fii-v_1}$ belongs to $\Cc^{0,\gamma'/2}(\Omb)$.

This enables us to construct a H\"older barrier for our problem. 
We take $ v_2= v_1 + h_{\fii-v_1}$. It is clear that
$ v_2 \in PSH(\Om)\cap \Cc^{0,\gamma'/2}(\Omb)$ and 
 $ v_2 \leq u$ on $\Omb$ by the comparison principle. 
Hence, Theorem \ref{via barrier} and  Lemma \ref{Laplacian2}  imply that the solution $u$ to $Dir(\Om,\fii,f d\mu)$ is H\"older continuous on $\Omb$ of exponent $ \epsilon \gamma/2$ for any $ 0< \gamma < 1/(nq+1)$.
\end{proof}

\begin{proof}[\bf Proof of Theorem \ref{main2 C0,alpha}.]
Let $v_1$ be as in the proof of Theorem \ref{main2 C11} and $h_{\fii - v_1}$ be the solution to $Dir(\Om,\fii-v_1,0)$.
In order to apply Theorem \ref{via barrier}, we set $ v= v_1 + h_{\fii-v_1}$. 
Hence, $ v \in PSH(\Om) \cap \in \Cc(\Omb)$, $ v= \fii $ on $\Omf$ and $ (dd^c v)^n \geq f d\mu$ in $\Om$. The comparison principle yields $ v \leq u $ in $\Om$. 
Moreover, By Theorem A in \cite{Ch15}, we have $ h_{\fii-v_1} \in \Cc^{0,\gamma''}(\Omb)$ with $ \gamma''= 1/2 \min \{ \alpha, \epsilon \gamma \}$.
Hence, it stems from Theorem \ref{via barrier} and Lemma \ref{Laplacian3} that the solution $ u$ is H\"older continuous on $\Omb$ of exponent $ \frac{\epsilon}{\epsilon+6} \min \{ \alpha, \epsilon \gamma \}$, for any $ 0< \gamma < 1/(nq+1)$.

Moreover, when $\Om$ is a smooth strongly pseudoconvex domain, we get using Theorem \ref{via barrier} and Corollary \ref{Laplacian 3 strongly} that the solution is   H\"older continuous with better exponent  $\frac{\epsilon}{\epsilon+2} \min \{ \alpha, \epsilon \gamma \}$, for any $ 0< \gamma < 1/(nq+1)$.
\end{proof}

\begin{corollary}
Let $\Om$ be a finite intersection of smooth strongly pseudoconvex domains in $\C^n$. Assume that $\fii \in \Cc^{0,\alpha}(\Omf)$, $0<\alpha \leq 1$,  and $ 0 \leq f \in L^p(\Om)$ for some $p>1$. Then the solution $u$ to the Dirichlet problem $Dir(\Om,\fii,f dV_{2n})$ belongs to $ \Cc^{0,\alpha'}(\Omb)$, with $ \alpha' =  \min \{ \alpha/2, \gamma \}$, for any $\gamma < 1/(nq+1)$ and $1/p+1/q=1$. 
\\
Moreover, if $ \fii \in \Cc^{1,1}(\Omf)$, then the solution $u$ is $\gamma$-H\"older continuous on $\Omb$.
\end{corollary}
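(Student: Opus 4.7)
The plan is to reduce the corollary to the two main theorems by using $\epsilon=2$ (the Lebesgue measure $dV_{2n}$ is a Hausdorff-Riesz measure of order $2n=2n-2+\epsilon$) and observing, via Example \ref{hyperconvex}(2), that a finite intersection $\Om=\bigcap_{i=1}^N \Om_i$ of smooth strongly pseudoconvex bounded domains is a bounded strongly hyperconvex Lipschitz domain. For the $\Cc^{1,1}$ case I would simply quote Theorem \ref{main2 C11} with $\epsilon=2$, which immediately gives the H\"older exponent $\epsilon\gamma/2=\gamma$ for any $0<\gamma<1/(nq+1)$.

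For the $\Cc^{0,\alpha}$ case, direct application of Theorem \ref{main2 C0,alpha} in the SHL regime only produces exponent $\frac{1}{4}\min\{\alpha,2\gamma\}$, which is worse than the advertised $\min\{\alpha/2,\gamma\}=\frac{1}{2}\min\{\alpha,2\gamma\}$. The claim therefore requires the strengthened variant of Theorem \ref{main2 C0,alpha} that is valid ``when $\Om$ is a smooth strongly pseudoconvex domain''. The only place in that argument where smoothness of a single defining domain is used is Corollary \ref{Laplacian 3 strongly}, where one exploits Remark \ref{rho smooth}, namely the existence of a psh defining function $\rho$ with $-\rho\approx\dist(\cdot,\Omf)$. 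So the core of the proof is to extend this property to finite intersections.

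To do so, I would take smooth defining functions $\rho_i$ of the $\Om_i$ with $-\rho_i\approx\dist(\cdot,\partial\Om_i)$ near $\partial\Om_i$, and form
\[
\rho(z):=\max_{1\le i\le N}\rho_i(z).
\]
Then $\rho$ is psh, Lipschitz, negative on $\Om$, zero on $\Omf$, and for any $z$ near $\Omf$, picking $i_0$ with $\dist(z,\Omf)=\dist(z,\partial\Om_{i_0})$ yields
\[
-\rho(z)=\min_i(-\rho_i(z))\le -\rho_{i_0}(z)\lesssim\dist(z,\Omf),
\]
while the reverse bound $-\rho(z)\gtrsim\dist(z,\Omf)$ is immediate from the Lipschitz property. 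So $-\rho\approx\dist(\cdot,\Omf)$. Replacing $\max$ by a standard regularized maximum $M_\eta$ (convex, increasing, psh-preserving) makes $\rho$ smooth while preserving both plurisubharmonicity and the two-sided distance estimate, so the Stokes-type integration by parts in the proof of Lemma \ref{Laplacian3}/Corollary \ref{Laplacian 3 strongly} goes through verbatim: one obtains $\|\hat u_{\delta_1}-u\|_{L^1(\Om_\delta,dV_{2n})}\le C\delta^{1-\epsilon_1}$ with $\delta_1=(1/2)\delta^{1/2+1/\epsilon}=(1/2)\delta$, i.e.\ $\lambda=1$.

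With this ingredient in hand, the proof of the ``smooth strongly pseudoconvex'' part of Theorem \ref{main2 C0,alpha} applies unchanged: construct the H\"older barrier $v=v_1+h_{\fii-v_1}$ as there (with $v_1\in\Cc^{0,\epsilon\gamma}(\bar B)$ the solution to the auxiliary Dirichlet problem on a ball and $h_{\fii-v_1}\in\Cc^{0,\frac12\min\{\alpha,\epsilon\gamma\}}(\Omb)$ by Theorem~A of \cite{Ch15}), and then feed this $v$ together with the $L^1$-estimate into Theorem \ref{via barrier} with $\lambda=1$ and $\tau=\epsilon/2-\epsilon_1=1-\epsilon_1$. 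Letting $\epsilon_1\to 0$ produces the H\"older exponent $\min\{\alpha/2,\gamma\}$, completing the proof. The main obstacle I expect is purely technical: ensuring that the regularized max $M_\eta(\rho_1,\dots,\rho_N)$ simultaneously retains the smoothness needed for the Stokes' theorem computation in Lemma \ref{Laplacian3} and the uniform bound $-\rho\approx\dist(\cdot,\Omf)$ near the boundary; once this patching is verified, every other step is a direct appeal to the results already established in the paper.
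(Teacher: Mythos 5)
The paper does not actually supply a proof of this corollary; it is stated after Theorems \ref{main2 C11} and \ref{main2 C0,alpha} with only a remark comparing the exponents to earlier references. You correctly observe that the $\Cc^{1,1}$ case is an immediate specialization of Theorem \ref{main2 C11} with $\epsilon=2$, and — more importantly — that the $\Cc^{0,\alpha}$ exponent $\min\{\alpha/2,\gamma\}$ cannot be read off from the general SHL version of Theorem \ref{main2 C0,alpha} (which only yields $\tfrac14\min\{\alpha,2\gamma\}$), but requires the ``smooth strongly pseudoconvex'' improvement coming from Corollary \ref{Laplacian 3 strongly}. Since a finite intersection of smooth strongly pseudoconvex domains is in general not a smooth domain, this is a genuine unacknowledged gap, and your plan of extending Remark \ref{rho smooth} and Corollary \ref{Laplacian 3 strongly} to the intersection case by producing a psh defining function with $-\rho\approx\dist(\cdot,\Omf)$ is the right way to close it.

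Two points of craftsmanship should be tightened. First, your justification of the lower bound is stated backwards: Lipschitz continuity of $\rho$ with $\rho|_{\Omf}=0$ gives $-\rho\lesssim\dist(\cdot,\Omf)$, not the reverse. The bound $-\rho\gtrsim\dist(\cdot,\Omf)$ instead follows from $-\rho_i\gtrsim\dist(\cdot,\partial\Om_i)$ (uniformly on $\overline\Om$) combined with the identity $\dist(z,\Omf)=\min_i\dist(z,\partial\Om_i)$ for $z\in\Om$, which deserves a line of proof: if $\xi_i$ is the closest point of $\partial\Om_i$ to $z\in\Om$, the segment $[z,\xi_i]$ exits $\Om$ through $\Omf$, so $\dist(z,\Omf)\leq\dist(z,\partial\Om_i)$ for every $i$; conversely the nearest point of $\Omf$ to $z$ lies on some $\partial\Om_{i_0}$, giving the opposite inequality. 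Second, the regularized-max step is superfluous and in fact introduces a nuisance: $M_\eta(\rho_1,\dots,\rho_N)\geq\max_i\rho_i$, so the smoothed function may become positive in a collar of $\Omf$ and stop being a defining function. This is avoidable because the Stokes computation in the proof of Lemma \ref{Laplacian3} is already carried out for the merely Lipschitz psh function $\tilde\rho$ of Lemma \ref{lemma tilde rho} (smoothness is put on $u$ via mollification, not on the weight). Hence the plain maximum $\rho=\max_i\rho_i$, which is Lipschitz, psh, satisfies $dd^c\rho\geq c\beta$, and obeys the two-sided distance estimate, can be inserted directly into the estimate of $\|(-\rho)^{1+\epsilon_1}\Delta u\|_{\Om}$ from Corollary \ref{Laplacian 3 strongly}. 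With those corrections your argument is complete and, modulo the omission, matches what the paper implicitly relies on.
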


The first part of this corollary was proved in Theorem 1.2 in \cite{BKPZ15} with the H\"older exponent $\min\{ 2\gamma,\alpha\}\gamma$ and  the second part  was proved in \cite{GKZ08} and \cite{Ch15} (see also \cite{N14,Ch14} for the complex Hessian equation).

\smallskip
Our final purpose concerns how to get the H\"older continuity of the solution to the Dirichlet problem $Dir( \Om,\fii, f d \mu)$, by means of the H\"older continuity of  subsolutions to  $Dir(\Om,\fii,d\mu)$ for some special measure $ \mu $ on $\Om$.

\begin{theorem}\label{main3}
Let $\mu$ be a finite Borel measure on a bounded strongly hyperconvex Lipschitz domain $\Om$. Let also $ \fii \in \Cc^{0,\alpha}(\Omf)$, $ 0< \alpha \leq 1$ and $ 0\leq f \in L^p(\Om,\mu)$, $p>1$.
Assume that there exists a $\lambda$-H\"older continuous plurisubharmonic function  $w$ in $\Omega$
such that $(dd^c w)^n \geq \mu$. If, near the boundary, $\mu$ is Hausdorff-Riesz of order $2n-2+\epsilon$ for some $0<\epsilon\leq 2$, then the solution $u$ to $Dir(\Om,\fii, fd\mu)$  is H\"older continuous on $\Omb$. 
\end{theorem}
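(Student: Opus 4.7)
The plan is to verify the two hypotheses of Theorem \ref{via barrier} in this setting: (i) a H\"older continuous $v\in PSH(\Om)\cap\Cc(\Omb)$ with $v\leq u$ in $\Om$ and $v=\fii$ on $\Omf$, and (ii) an $L^1$-bound $\|\hat u_{\delta_1}-u\|_{L^1(\Om_\delta,\mu)}\leq c\,\delta^{\tau}$ for suitable $\delta_1=\delta^\lambda$ and $\tau>0$. Existence of the continuous solution $u$ is the first step: the H\"older continuity of $w$ combined with $(dd^cw)^n\geq\mu$ forces $\mu$ to be dominated by Bedford--Taylor capacity to any power, by classical Ko{\l}odziej--Dinh--Nguyen type estimates on Monge--Amp\`ere masses of H\"older psh functions, so $fd\mu$ satisfies Condition $\mathcal{H}(\infty)$ via Lemma \ref{dominated by cap}, and Theorem \ref{main1} produces $u$.

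For (i) I build a H\"older continuous subsolution to $Dir(\Om,\fii,fd\mu)$ starting from $w$. Following the philosophy of the compact K\"ahler case (DDGPKZ--Ko{\l}odziej--Nguyen), I combine $w$ with an appropriate concave transform or Perron-type envelope to produce $\Psi\in PSH(\Om)\cap\Cc^{0,\nu_0}(\Omb)$ with $(dd^c\Psi)^n\geq fd\mu$, the exponent $\nu_0$ depending on $p,\lambda$ and $\|f\|_{L^p(\Om,\mu)}$. Adding the defining function $A\tilde\rho$ from Lemma \ref{lemma tilde rho} (with $A$ large enough for $\Psi+A\tilde\rho\in PSH(\Om)$) and a H\"older psh envelope $h$ of the remaining boundary datum $\fii-(\Psi+A\tilde\rho)|_{\Omf}$ (via Corollary 4.6 of \cite{Ch15}) delivers $v:=\Psi+A\tilde\rho+h\in PSH(\Om)\cap\Cc^{0,\nu}(\Omb)$ with $v=\fii$ on $\Omf$ and $(dd^cv)^n\geq fd\mu$. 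The comparison principle then yields $v\leq u$.

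For (ii) I exploit both features of $\mu$. On any relatively compact subset of $\Om$, the inequality $\mu\leq(dd^cw)^n$, together with the fact that the Monge--Amp\`ere measure of a H\"older psh function is itself a Hausdorff--Riesz measure, namely $(dd^cw)^n(B(\xi,t))\leq Ct^{2n-2+\epsilon_w}$ for some $\epsilon_w=\epsilon_w(\lambda)>0$, gives an interior Hausdorff--Riesz bound for $\mu$. Combined with the hypothesis that $\mu$ is Hausdorff--Riesz of order $2n-2+\epsilon$ near $\Omf$, one obtains the global estimate $\mu(B(z,r)\cap\Om)\leq Cr^{2n-2+\epsilon'}$ with $\epsilon':=\min(\epsilon,\epsilon_w)$. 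The proof of Lemma \ref{Laplacian3} then transfers verbatim with $\epsilon'$ in place of $\epsilon$, delivering the required $L^1$ estimate, and Theorem \ref{via barrier} concludes the proof with some explicit positive H\"older exponent for $u$.

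The main obstacle is the construction of the H\"older subsolution $\Psi$ with $(dd^c\Psi)^n\geq fd\mu$ in the presence of an unbounded $L^p$-density $f$: the direct scaling $Bw$ fails because $\|f\|_{L^\infty(d\mu)}=\infty$ in general, so one must resort to a nonlinear construction whose H\"older regularity degrades quantitatively with $p$ and $\lambda$. Verifying simultaneously that the constructed $\Psi$ remains H\"older continuous up to $\Omb$ and that it satisfies the Monge--Amp\`ere inequality in the distributional sense is the technical heart of the argument, and is where all the integrability information about $f$ is consumed.
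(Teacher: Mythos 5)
Your high-level structure is right: verify hypotheses (i) and (ii) of Theorem \ref{via barrier}. But both steps as written have genuine problems, and in each case the paper circumvents the difficulty with a different and simpler device.

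For (ii), the key claim that the Monge--Amp\`ere measure of a $\lambda$-H\"older psh function satisfies $(dd^cw)^n\big(B(\xi,t)\big)\leq C\,t^{2n-2+\epsilon_w}$ for some $\epsilon_w>0$ is false when $n\geq 2$. Take $w(z)=|z_1|+|z_2|$ on a domain in $\C^2$; this is Lipschitz and psh, and a direct computation gives $(dd^cw)^2\propto \frac{1}{|z_1||z_2|}\,dV_4$, so $(dd^cw)^2\big(B(0,r)\big)\sim r^2=r^{2n-2}$ exactly, with no positive margin $\epsilon_w$. Thus the ``global Hausdorff--Riesz'' bound you want does not exist, and your transfer of Lemma \ref{Laplacian3} with $\epsilon'=\min(\epsilon,\epsilon_w)$ collapses. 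What the paper does instead is split the $L^1$ integral into an interior piece, handled by the sharper estimate of Theorem 4.3 in \cite{DDGPKZ14} (which bounds $\int_K(\hat u_{\delta_1}-u)(dd^cw)^n$ directly by $\|\Delta u\|_U\,\delta_1^{2\lambda/(\lambda+2n)}$ without ever claiming $(dd^cw)^n$ is Hausdorff--Riesz), and a boundary piece where $\mu$ is genuinely Hausdorff--Riesz and Lemma \ref{Laplacian3} applies. The two contributions are added, not minimized, and the resulting $\tau$ involves both exponents.

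For (i), you set yourself the needlessly hard problem of producing a \emph{global} H\"older psh subsolution $\Psi$ with $(dd^c\Psi)^n\geq f\,d\mu$ throughout $\Om$, and you flag (correctly) that this is the main obstacle to your route. The paper does not need that. It introduces the modified measure $\tilde\mu$ which is Hausdorff--Riesz everywhere and coincides with $\mu$ outside a compact $\Om_1\Subset\Om$, solves $Dir(\Om,0,f\,d\tilde\mu)$ to obtain a H\"older $w_1$ (by Theorem \ref{main2 C11}), and sets $v=w_1+h_\fii+A\rho$. On $\Om\setminus\Om_1$ the inequality $(dd^cv)^n\geq f\,d\mu$ holds because $\tilde\mu=\mu$ there, while on $\Om_1$ the comparison is forced simply by choosing $A\gg1$ so that $A\rho$ pushes $v$ below $u$ near $\partial\Om_1$; the comparison principle on $\Om\setminus\Om_1$ then propagates $v\leq u$ to the whole domain. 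This local-plus-large-offset trick entirely sidesteps the need for the DDGPKZ-type envelope construction you invoke. You should adopt the split strategy for both (i) and (ii) rather than try to work globally on $\Om$.
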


\begin{proof}
Let $\Om_1 \Subset \Om$ be an open set such that $\mu$ is a Hausdorff-Riesz measure on $\Om \setminus \Om_1$. Let also $\tilde{\mu}$ be a Hausdorff-Riesz measure of order $2n-2+\epsilon$ so that $\tilde{\mu}$ equals $\mu$ in $\Omega \setminus \Om_1$.

As $\fii$ is not $\Cc^{1,1}$-smooth, we can not control $\| \hat{u}_{\delta} -u \|_{L^1(\Om_\delta, \mu)}$ as in the proof of Lemma \ref{Laplacian2}. Thus, we will estimate $\| \hat{u}_{\delta_1} -u \|_{L^1(\Om_\delta, \mu)}$ with  $ \delta_1:= (1/2)\delta^{1/2+3/\epsilon}$. 

We have
$$
\int_{\Om_\delta} (\hat{u}_{\delta_1} -u) d\mu \leq \int_{\Om_1} (\hat{u}_{\delta_1} -u) d\mu +\int_{\Om_\delta } (\hat{u}_{\delta_1} -u) d \tilde \mu.
$$
Fix $\epsilon_1>0$ small enough and let $U \Subset \Omega$ be a a neighborhood of $ \bar{\Om}_1$.
Then,  Theorem 4.3 in \cite{DDGPKZ14} and Lemma \ref{Laplacian3}  yield that
\begin{equation*}
\begin{aligned}
\int_{\Om_\delta} (\hat{u}_{\delta_1} -u) d\mu & \leq   \int_{\Om_1} (\hat{u}_{\delta_1} -u) (dd^c w)^n + \int_{\Om_\delta } (\hat{u}_{\delta_1} -u) d \tilde \mu \\
& \leq C_1 \| \Delta u\|_{U} \delta_1^\frac{2\lambda}{\lambda+ 2n} +  C_2 \delta^{\epsilon/2 - \epsilon_1}, \\
\end{aligned}
\end{equation*}
where $ C_1=C_1(\Om_1,U)$ is a positive constant and $C_2$ depends on $n$, $\Om$, $\epsilon$, $\epsilon_1$ and $\|u\|_{L^\infty(\Omb)}$. 
Since the mass of $\Delta u$ is locally bounded, there exists a constant $C_3>0$ such that 
$$
\int_{\Om_\delta} (\hat{u}_{\delta_1} -u) d\mu \leq  C_3  \delta^{\tau},
$$
where 
$\tau =\min\{ \frac{\epsilon}{2}- \epsilon_1, \frac{\lambda(\epsilon+6)}{\epsilon(\lambda +2n)} \}$.
\\
The last requirement to apply Theorem \ref{via barrier} is to construct a function 
$ v \in \Cc^{0,\nu}(\Omb)$ for $ 0<\nu \leq 1$ such that $ v \leq u $ in $\Om$ and $ v=\fii$ on $\Omf$.  Let us denote by $w_1$ the solution to $Dir(\Om,0,f d\tilde{\mu})$ and $ h_{\fii}$  the solution to $Dir(\Om,\fii,0)$.
Now, set $ v= w_1+ h_{\fii}+ A\rho$ with $A \gg 1$ so that $v \leq u $ in a neighborhood of $\Omb_1$. It is clear that $ v \in PSH(\Om) \cap \Cc(\Omb)$,  $ v=\fii$ on $\Omf$ and $ v \leq u $ in $\Om$ by the comparison principle.
Moreover, by Theorem A in \cite{Ch15} and Theorem \ref{main2 C11}, we infer that $ v \in \Cc^{0,\nu}(\Omb)$, for $ \nu= 1/2 \min \{ \epsilon \gamma, \alpha\}$ and any $\gamma < 1/(nq+1)$.
Finally, we get from Theorem \ref{via barrier} that $u$ is H\"older continuous on $\Omb$ of exponent $ \frac{\epsilon}{\epsilon+6} \min\{ \alpha, \epsilon \gamma,  \frac{2\lambda \gamma (\epsilon+6)}{\epsilon (\lambda+ 2n)} \}$.
\end{proof}

The following are nice applications of Theorem \ref{main3}.

\begin{corollary}\label{main4}
Let $\Om \subset \C^n$ be a bounded strongly hyperconvex Lipschitz domain and  let $\mu$ be a finite Borel measure with compact support on $\Om$. Let also $ \fii \in \Cc^{0,\alpha}(\Omf)$, $ 0< \alpha \leq 1$ and $0\leq f \in  L^p(\Om,\mu)$, $p>1$. Assume that there exists a $\lambda$-H\"older continuous psh function  $w$ in $\Omega$ such that  $ (dd^c w)^n \geq \mu $. Then the solution to the Dirichlet problem $Dir(\Om,\fii, f d\mu)$ is H\"older continuous on $\Omb$ of exponent $ \min\{ \frac{\alpha}{2},  \frac{2\lambda \gamma}{\lambda+ 2n} \}$, for any $ \gamma < 1/(nq+1)$ and $1/p+1/q=1$.   
\end{corollary}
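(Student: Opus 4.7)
The plan is to follow the scheme of the proof of Theorem \ref{main3}, but to exploit the compact support of $\mu$ so that the central $L^1$-estimate on $\hat u_\delta-u$ can be carried out with $\delta_1=\delta$. This eliminates the loss factor $1/\lambda_0$ with $\lambda_0=1/2+3/\epsilon$ that Theorem \ref{via barrier} inherits from Lemma \ref{Laplacian3}, and is precisely what produces the sharper exponent stated in the corollary.

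First I would construct a H\"older lower barrier. Fix an open set $\Om_1\Subset\Om$ with $\supp\mu\subset\Om_1$, and set $\tilde\mu:=\chi_{\Om_1}\,dV_{2n}$, a Hausdorff-Riesz measure of order $2n$ (i.e.\ $\epsilon=2$) which coincides with $\mu$ on $\Om\setminus\Om_1$, both being zero there. Let $w_1\in PSH(\Om)\cap\Cc(\Omb)$ solve $Dir(\Om,0,d\tilde\mu)$; since the boundary data is $\Cc^{1,1}$, Theorem \ref{main2 C11} yields $w_1\in\Cc^{0,\gamma}(\Omb)$ for every $\gamma<1/(nq+1)$. Let $h_\fii$ denote the solution to $Dir(\Om,\fii,0)$, which lies in $\Cc^{0,\alpha/2}(\Omb)$ by Theorem A of \cite{Ch15}. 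For $A\gg1$ the function $v:=w_1+h_\fii+A\rho$ belongs to $PSH(\Om)\cap\Cc^{0,\nu}(\Omb)$ with $\nu=\min\{\gamma,\alpha/2\}$, satisfies $v=\fii$ on $\Omf$, and $A$ can be chosen so that $v\leq u$ in a neighborhood of $\overline{\Om_1}$. On $\Om\setminus\overline{\Om_1}$ we have $(dd^c u)^n=fd\mu=0$ while $(dd^c v)^n\geq 0$, so the comparison principle forces $v\leq u$ there too. This gives the required H\"older subsolution.

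Next I would estimate $\|\hat u_\delta-u\|_{L^1(\Om_\delta,\mu)}$ directly with $\delta_1=\delta$. Since $\supp\mu\Subset\Om$, there is $\delta_0>0$ such that $\supp\mu\subset\Om_\delta$ for every $0<\delta<\delta_0$. Using $\hat u_\delta\geq u$ (plurisubharmonicity of $u$) together with the hypothesis $(dd^c w)^n\geq\mu$,
$$\int_{\Om_\delta}(\hat u_\delta-u)\,d\mu=\int_{\supp\mu}(\hat u_\delta-u)\,d\mu\leq\int_{\supp\mu}(\hat u_\delta-u)\,(dd^c w)^n.$$
Fixing an open set $U$ with $\supp\mu\subset U\Subset\Om$, Theorem 4.3 in \cite{DDGPKZ14}, applied exactly as in the proof of Theorem \ref{main3} but with $\delta_1=\delta$, bounds the right-hand side by $C\,\|\Delta u\|_U\,\delta^{2\lambda/(\lambda+2n)}$. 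The mass $\|\Delta u\|_U$ is finite by the Chern-Levine-Nirenberg inequality on the relatively compact set $U$, whence
$$\|\hat u_\delta-u\|_{L^1(\Om_\delta,\mu)}\leq C'\,\delta^{\tau},\qquad\tau:=\frac{2\lambda}{\lambda+2n}.$$

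To conclude, I would feed the barrier $v$ of exponent $\nu=\min\{\gamma,\alpha/2\}$, the exponent $\tau=2\lambda/(\lambda+2n)$, and $\lambda_0=1$ into Theorem \ref{via barrier}. The returned H\"older exponent of $u$ is
$$\min\{\nu,\,\alpha/2,\,\tau\gamma\}=\min\bigl\{\gamma,\,\alpha/2,\,2\lambda\gamma/(\lambda+2n)\bigr\}=\min\bigl\{\alpha/2,\,2\lambda\gamma/(\lambda+2n)\bigr\},$$
the last identity following because $\lambda\leq 1\leq 2n$ gives $2\lambda\gamma/(\lambda+2n)\leq\gamma$, absorbing the bare $\gamma$ term. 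The main obstacle, and what the argument gains over a direct use of Theorem \ref{main3}, is legitimizing $\delta_1=\delta$: it rests on the compact support of $\mu$, which confines all relevant mass of $\Delta u$ to a set compactly contained in $\Om$ and thereby bypasses the weighted estimate of Lemma \ref{Laplacian3} with its larger scaling $\delta_1=\delta^{1/2+3/\epsilon}$.
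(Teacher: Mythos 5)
Your proposal is correct, and since the paper supplies no explicit proof of this corollary, your argument fills exactly the gap the reader is expected to close. You are right that the stated exponent does \emph{not} come from simply plugging $\epsilon=2$ into Theorem \ref{main3} (which would give $\tfrac14\min\{\alpha,\,2\gamma,\,\tfrac{8\lambda\gamma}{\lambda+2n}\}$, strictly worse in general); one must rerun the scheme of Theorem \ref{main3} using the compact support of $\mu$ to take $\delta_1=\delta$, i.e.\ $\lambda=1$ in the notation of Theorem \ref{via barrier}, thereby bypassing Lemma \ref{Laplacian3} and its weighted-mass estimate. Your barrier construction (comparison outside a neighborhood of $\supp\mu$ where $(dd^c u)^n=0$, and choice of $A\gg1$ inside), the bound $\int_{\supp\mu}(\hat u_\delta-u)(dd^c w)^n\le C\|\Delta u\|_U\,\delta^{2\lambda/(\lambda+2n)}$ via Theorem 4.3 of \cite{DDGPKZ14} together with the local Chern--Levine--Nirenberg bound on $\|\Delta u\|_U$, and the final absorption of the bare $\gamma$ using $2\lambda/(\lambda+2n)\le 1$ (valid since $\lambda\le1\le 2n$), are all sound and give precisely $\min\{\alpha/2,\,2\lambda\gamma/(\lambda+2n)\}$ as claimed.
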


\begin{example}
Let $\mu$ be a finite Borel measure with compact support on  a  bounded strongly hyperconvex Lipschitz domain $\Om$. Let also  $ \fii \in \Cc^{0,\alpha}(\Omf)$, $ 0< \alpha \leq 1$ and $0\leq f \in  L^p(\Om,\mu)$, $p>1$. Suppose that $\mu \leq dV_n$, where $dV_n$ is  the Lebesgue measure of the totally real part $\R^n$ of $\C^n$, then the solution to the Dirichlet problem $Dir(\Om,\fii, f d\mu)$ is H\"older continuous on $\Omb$ of exponent  $ \min\{ \frac{\alpha}{2}, \frac{2 \gamma}{1+2n} \}$, for any $ \gamma < 1/(nq+1)$ and $1/p+1/q=1$.
\end{example}

Indeed, since $ \R^n=\{ Im z_j=0, j=1,...,n \}$, one can present the Lebesgue measure of the totaly real part $\R^n$ of $\C^n$ in the form
$$
 \left(dd^c \sum_{j=1}^n (Imz_j)_+ \right)^n.
$$
Let us set $ w= \sum_{j=1}^n (Imz_j)_+$. It is clear that  $w \in PSH(\Om) \cap \Cc^{0,1}(\Omb)$ and $ \mu \leq (dd^c w)^n$ on $\Om$.  Corollary \ref{main4} yields that the solution $u$ belongs to $\Cc^{0,\alpha'}(\Omb)$ with $\alpha'= \min\{ \alpha/2, \frac{2 \gamma}{1+ 2n}  \}$, for any $ \gamma < 1/(nq+1)$.

\bigskip

\noindent 
Mohamad Charabati \\
Institut de Math\'ematiques de Toulouse \\
Universit\'e Paul Sabatier \\
118 route de Narbonne \\
31062 Toulouse Cedex 09 (France). \\
E-mail: {\tt mohamad.charabati@math.univ-toulouse.fr }

\end{document}